\let\vec\relax
\DeclareMathAccent{\vec}{\mathord}{letters}{"7E}
\newcommand*{\dt}[1]{ \accentset{\mbox{\large\bfseries .}}{#1}}
\DeclareMathOperator{\bigtimes}{{\hbox{\large\sf X}}}
\def\R{{\mathbb R}}
\def\C{{\mathbb C}}
\def\eps{\varepsilon}
\def\P{\mathrm{P}}
\newcommand\calM{\mathcal{M}}
\newcommand\bfr{{\mathbf r}}
\newcommand\bfI{{\mathbf I}}
\newcommand\bfA{{\mathbf A}}
\newcommand\bfB{{\mathbf B}}
\newcommand\bfD{{\mathbf D}}
\newcommand\bfF{{\mathbf F}}
\newcommand\bfS{{\mathbf S}}
\newcommand\bfU{{\mathbf U}}
\newcommand\bfV{{\mathbf V}}
\newcommand\bfY{{\mathbf Y}}
\newcommand\bfZ{{\mathbf Z}}
\def\eps{\varepsilon}
\def\phi{\varphi}
\author{Gianluca Ceruti, Christian Lubich}
\title{Time integration of symmetric and anti-symmetric \\
low-rank matrices and  Tucker tensors}
\date{}
\institute{G. Ceruti and Ch. Lubich \at
	Mathematisches Institut, Universit{\"a}t T{\"u}bingen, Auf der Morgenstelle 10, D-72076 T{\"u}bingen, Germany.
	\email{\{ceruti, lubich\}@na.uni-tuebingen.de}           
}
\begin{document}

\maketitle

\begin{abstract} A numerical integrator is presented that computes a symmetric or skew-symmetric low-rank approximation to large symmetric or skew-symmetric time-depen\-dent matrices that are either given explicitly or are the unknown solution to a matrix differential equation. A related algorithm is given for the approximation of symmetric or anti-symmetric time-dependent tensors by symmetric or anti-symmetric Tucker tensors of low multilinear rank. The proposed symmetric or anti-symmetric low-rank integrator is different from recently proposed projector-splitting integrators for dynamical low-rank approximation, which do not preserve symmetry or anti-symmetry. However, it is shown that the (anti-)symmetric low-rank integrators retain favourable properties of the projector-splitting integrators: low-rank time-dependent matrices and tensors are reproduced exactly, and the error behaviour is robust to the presence of small singular values, in contrast to standard integration methods applied to the differential equations of dynamical low-rank approximation.
Numerical experiments illustrate the behaviour of the proposed integrators.
\end{abstract}

\section{Introduction}
In this paper we propose and analyse an algorithm that computes a symmetric or skew-symmetric low-rank approximation to large symmetric or skew-symmetric time-depen\-dent matrices that are either given explicitly or are the unknown solution to a matrix differential equation. A related algorithm is given for the approximation of symmetric or anti-symmetric time-dependent tensors by symmetric or anti-symmetric Tucker tensors of low multilinear rank.

In the matrix case,  motivation for this work comes from Lyapunov and Riccati differential equations, which have large symmetric matrices as solutions, which can often be well approximated by low-rank matrices \cite{mena2018numerical}. For tensors, our main motivation comes from the quantum dynamics of bosonic or fermionic systems, where the symmetric or anti-symmetric wave function is  approximated by low-rank symmetric or anti-symmetric Tucker tensors in the
MCTDHB and MCTDHF methods for bosons and fermions, respectively \cite{AlonSC08,Caillat:MCTDHF}. An efficient integrator that preserves symmetry and anti-symmetry and uses them to reduce the computational complexity, is needed in these and other applications, such as using a step of the integrator as a computationally efficient retraction in optimization algorithms for (anti-)symmetric low-rank matrices and tensors.

The algorithms proposed in this paper are non-trivial modifications of the pro\-jector-splitting integrators for the dynamical low-rank approximation of matrices and Tucker tensors that were proposed in \cite{LubichOseledets} and \cite{Lubich:MCTDH,LubichVandWalach}, respectively. The projector-splitting integrators have been shown to possess remarkable robustness to the typical presence of small singular values \cite{KieriLubichWalach,LubichVandWalach}, as opposed to applying standard integrators  to the differential equations of dynamical low-rank approximation that are given in \cite{KochLubich07,KochLubich10}. However, the projector-splitting integrators do not preserve symmetry or anti-symmetry.

We will show that the (anti-)symmetry-preserving integrators proposed here retain the robustness with respect to small singular values of the projector-splitting algorithms. This relies on an exactness property, namely that time-dependent matrices and tensors of the approximation rank are reproduced exactly by the integrator. This exactness property will also be shown to be retained from the projector-splitting integrators. We note, however, that the integrators proposed here can no longer be interpreted as splitting integrators.

The new (anti-)symmetry-preserving integrators are favourable also from the computational viewpoint: compared with the projector-splitting integrator, the computational cost is halved in the (skew-)symmetric matrix case; in the case of $d$-dimensional (anti-)symmetric tensors, the computational cost for the core tensor is reduced by the factor $1/d!$, and that for the basis matrices by $1/d$.

A first attempt to modify the projector-splitting integrator and preserve the symmetry in the matrix setting, can be found in \cite{mena2018numerical}: numerical examples show the correct behaviour of the approximate solution but no convergence analysis or extension to multi-dimensional arrays is provided, and no use of the symmetry is made to reduce the computational effort.

The outline of the paper is the following: in Section 2, we briefly restate the idea of dynamical low-rank approximation for matrices and we present the matrix projector-splitting integrator with some of its properties. In Section 3, we consider the case of (skew-)symmetric matrices; we present the (skew-)symmetry-preserving low-rank integrator and study its properties. In Section 4, we recapitulate the projector-splitting integrator for low-rank Tucker tensors. In Section 5, we present the integrator for (anti)-symmetric tensors of low multilinear rank and study its properties. In the final section, we present numerical experiments that illustrate the approximation properties and the robustness to small singular values.

Throughout the paper, we use the convention to denote matrices by boldface capital letters and tensors by italic capital letters.
%

\section{General matrices: recap of the projector-splitting integrator for dynamical low-rank approximation}
The objective is to approximate large time-depend\-ent matrices $\bfA(t)\in \R^{m\times n}$  for $0\le t \le T$ by rank-$r$ matrices $\bfY(t)$ with comparatively low rank $r\ll m,n$, which require much less storage than $\bfA(t)$ when they are available in a factorized, SVD-like form. The large, or often too large matrices $\bfA(t)$ may be given explicitly or they are the unknown solution to a matrix differential equation
(with right-hand side function $\bfF:\R\times \R^{m\times n} \to \R^{m\times n}$)
\begin{equation} \label{eq:fullEq-mat}
\dt{\bfA}(t) = \bfF(t, \bfA(t)), 
\qquad
\bfA(t_0) = \bfA_0 .
\end{equation}
Dynamical low-rank approximation as presented in  \cite{KochLubich07} determines $\bfY(t)$ as the solution of a projected matrix
differential equation,
with a projection $\P(\bfY)$ onto the tangent space $T_\bfY \calM_r$ of the manifold of rank-$r$ matrices at $\bfY\in\calM_r$,
\begin{equation} \label{eq:projEq}
\dt{\bfY}(t) = \P(\bfY(t))\bfF(t, \bfY(t)),
\qquad
\bfY(t_0) = \bfY_0,
\end{equation}
where $\bfY_0$ is a rank-$r$ approximation to $\bfA_0$, typically obtained by a truncated singular value decomposition.
(Here, $\bfF(t,\bfY) = \dt\bfA(t)$ if $\bfA(t)$ is given explicitly.) The solution $\bfY(t)$ to this projected matrix differential equation then stays in the rank-$r$ manifold~$\calM_r$.

To make this abstract formulation practically useful,  rank-$r$ matrices $\bfY(t)$ are written (non-uniquely) in factored form
\begin{equation} \label{USV}
\bfY(t) = \bfU(t)\bfS (t)\bfV(t)^\top, 
\end{equation}
where the slim matrices $\bfU(t)\in \R^{m\times r}$ and $\bfV(t)\in \R^{n\times r}$ each have $r$ orthonormal columns, 
and the small square matrix $\bfS(t)\in \R^{r\times r}$ is invertible. We choose the tangent space projection $\P(\bfY)$ as the orthogonal projection onto $T_\bfY(\calM_r)$ with respect to the Euclidean or Frobenius inner product $\langle \bfA,\bfB \rangle = \textbf{vec}(\bfA)^\top\textbf{vec}(\bfB)$, where $\textbf{vec}(\bfA)\in\R^{mn}$ is a vectorization of $\bfA$.
Then, $\P(\bfY)$ is given as an alternating sum of three subprojections \cite{KochLubich07},
\begin{equation}\label{P}
\P(\bfY)\textbf{Z} = \textbf{ZVV}^\top - \textbf{UU}^\top \textbf{Z VV}^\top + \textbf{UU}^\top\textbf{Z} .
\end{equation}
The projector-splitting integrator of \cite{LubichOseledets} is a Lie--Trotter or Strang splitting method that splits the right-hand side of \eqref{eq:projEq} according to the three terms in \eqref{P}. It turned out that such a splitting combines very well with the factorization~\eqref{USV}. In the first substep of a Lie--Trotter splitting, $\textbf{K}:=\bfU\bfS$ is updated, in the second substep $\bfS$ is updated, and in the third substep $\textbf{L}:=\bfV\bfS^\top$. The algorithm alternates between the numerical solution of matrix differential equations (of dimensions $m\times r$, $r\times r$, $n\times r$) and orthogonal decompositions of slim matrices (of dimensions $m\times r$ and $n\times r$).
One time step of integration from time $t_0$ to $t_1=t_0+h$  starting from a factored rank-$r$ matrix 
$\bfY_0=\bfU_0\bfS_0\bfV_0^\top$ proceeds as follows:

\begin{enumerate}
	\item 
	\textbf{K-step} : Update $ \bfU_0 \rightarrow \bfU_1, \bfS_0 \rightarrow \hat{\bfS}_1$ \\
	Integrate from $t=t_0$ to $t_1$ the $m \times r$ matrix differential equation
	$$ \dot{\textbf{K}}(t) = \bfF(t, \textbf{K}(t) \bfV_0^\top) \bfV_0, \qquad \textbf{K}(t_0) = \bfU_0 \bfS_0.$$
	Perform a QR factorization $\textbf{K}(t_1) = \bfU_1 \hat{\bfS}_1$. 
		
	\item
	\textbf{S-step} : Update $ \hat{\bfS}_1 \rightarrow \tilde{\bfS}_0$ \\
	Integrate from $t=t_0$ to $t_1$ the $r \times r$ matrix differential equation
	$$ \dot{\bfS}(t) = - \bfU_1^\top \bfF(t, \bfU_1 \bfS(t) \bfV_0^\top) \bfV_0, \qquad \bfS(t_0) = \hat{\bfS}_1,$$
	and set $\tilde{\bfS}_0 =\bfS(t_1)$.
	
	\item
	\textbf{L-step} : Update $ \bfV_0 \rightarrow \bfV_1, \tilde{\bfS}_0 \rightarrow \bfS_1$ \\
	Integrate from $t=t_0$ to $t_1$ the $n \times r$ matrix differential equation
	$$  \dot{\textbf{L}}(t) =\bfF(t, \bfU_1 \textbf{L}(t)^\top)^\top  \bfU_1, \qquad \textbf{L}(t_0) = \bfV_0 \tilde{\bfS}_0^\top. $$
	Perform a QR factorization $\textbf{L}(t_1) = \bfV_1 \bfS_1^\top$.
\end{enumerate} 

Then, the approximation after one time step is given by 
$$ \bfY_1 = \bfU_1 \bfS_1 \bfV_1^\top .$$
To proceed, we iterate the procedure taking $\bfY_1$ as starting point for the next step. 

The above algorithm describes the first-order Lie--Trotter splitting. The algorithm for the second-order Strang splitting is obtained by concatenating the above algorithm with the same algorithm in reversed order, each for half the step-size; see \cite{LubichOseledets} for the detailed description.

The projector-splitting integrator has remarkable properties. First, it reproduces rank-$r$ matrices without error.

\begin{theorem}[{Exactness property, \cite[Theorem 4.1]{LubichOseledets}}]
\label{thm:proj-split-exact}
	Let $\bfA(t) \in \mathbb{R}^{m \times n}$ be of rank~$r$  for $t_0 \leq t \leq t_1$,
	so that $\bfA(t)$ has a factorization \eqref{USV}, $\bfA(t)=\bfU(t)\bfS(t)\bfV(t)^\top$. 
	Moreover, assume  that the $r\times r$ matrix $ \bfV(t_1)^\top \bfV(t_0)$ is invertible. With $\bfY_0 = \bfA(t_0)$, the projector-splitting integrator for 
	$\dt\bfY(t)=\P(\bfY(t))\dt \bfA(t)$ is then exact: $ \bfY_1 = \bfA(t_1)$.
\end{theorem}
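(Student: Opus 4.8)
The plan is to use that, since $\bfA(t)$ is given explicitly, the right-hand side $\bfF(t,\bfY)=\dt\bfA(t)$ in each of the three substeps does not depend on the current iterate. Each substep differential equation therefore has a right-hand side that is constant in the unknown matrix, so it integrates exactly by the fundamental theorem of calculus, and the resulting increments are differences $\bfA(t_1)-\bfA(t_0)$ sandwiched between the frozen basis matrices $\bfV_0$ or $\bfU_1$. From there the whole proof is bookkeeping: the low-rank factorizations of $\bfA(t_0)$ and $\bfA(t_1)$, together with the orthonormality of columns, make the unwanted terms cancel.

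Concretely, I would first do the \textbf{K}-step: integrating $\dot{\textbf{K}}=\dt\bfA(t)\bfV_0$ gives $\textbf{K}(t_1)=\bfU_0\bfS_0+(\bfA(t_1)-\bfA(t_0))\bfV_0$, and since $\bfA(t_0)=\bfU_0\bfS_0\bfV_0^\top$ with $\bfV_0^\top\bfV_0=\bfI$ we have $\bfA(t_0)\bfV_0=\bfU_0\bfS_0$; the initial term cancels and $\textbf{K}(t_1)=\bfA(t_1)\bfV_0$. Writing $\bfA(t_1)=\bfU(t_1)\bfS(t_1)\bfV(t_1)^\top$, the $r\times r$ matrix $\bfS(t_1)\,\bfV(t_1)^\top\bfV_0=\bfS(t_1)\,(\bfV(t_1)^\top\bfV(t_0))$ is a product of invertible matrices, hence invertible; thus $\textbf{K}(t_1)=\bfU(t_1)\cdot(\text{invertible})$ has full column rank $r$ and the same column space as $\bfA(t_1)$. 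Consequently the QR factorization $\textbf{K}(t_1)=\bfU_1\hat\bfS_1$ produces an orthonormal basis $\bfU_1$ of the range of $\bfA(t_1)$, i.e., $\bfU_1\bfU_1^\top\bfA(t_1)=\bfA(t_1)$, and $\hat\bfS_1=\bfU_1^\top\bfA(t_1)\bfV_0$. This is the only step that is not purely mechanical, and it is precisely where the hypothesis on $\bfV(t_1)^\top\bfV(t_0)$ is needed; it is the main point of the argument.

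The remaining two substeps are then routine. For the \textbf{S}-step, integrating $\dot\bfS=-\bfU_1^\top\dt\bfA(t)\bfV_0$ yields $\tilde\bfS_0=\hat\bfS_1-\bfU_1^\top(\bfA(t_1)-\bfA(t_0))\bfV_0$, and inserting $\hat\bfS_1=\bfU_1^\top\bfA(t_1)\bfV_0$ cancels the $\bfA(t_1)$ contributions, leaving $\tilde\bfS_0=\bfU_1^\top\bfA(t_0)\bfV_0$. For the \textbf{L}-step, integrating $\dot{\textbf{L}}=\dt\bfA(t)^\top\bfU_1$ gives $\textbf{L}(t_1)=\bfV_0\tilde\bfS_0^\top+(\bfA(t_1)-\bfA(t_0))^\top\bfU_1$; using $\bfV_0\tilde\bfS_0^\top=\bfV_0\bfV_0^\top\bfA(t_0)^\top\bfU_1$ and that $\bfA(t_0)^\top=\bfV_0\bfS_0^\top\bfU_0^\top$ has its columns in the range of $\bfV_0$, the projector $\bfV_0\bfV_0^\top$ acts as the identity on $\bfA(t_0)^\top\bfU_1$, so the $\bfA(t_0)$ terms cancel and $\textbf{L}(t_1)=\bfA(t_1)^\top\bfU_1$. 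Finally, the QR factorization $\textbf{L}(t_1)=\bfV_1\bfS_1^\top$ gives $\bfS_1\bfV_1^\top=\bfU_1^\top\bfA(t_1)$, whence $\bfY_1=\bfU_1\bfS_1\bfV_1^\top=\bfU_1\bfU_1^\top\bfA(t_1)=\bfA(t_1)$ by the column-space identity established in the \textbf{K}-step. This completes the proof.
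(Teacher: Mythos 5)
Your argument is correct, and it is essentially the standard proof: the paper itself states this theorem without proof, citing \cite[Theorem 4.1]{LubichOseledets}, and your substep-by-substep computation (exact integration since $\bfF(t,\bfY)=\dt\bfA(t)$, the identity $\textbf{K}(t_1)=\bfA(t_1)\bfV_0$, the invertibility of $\bfS(t_1)\bfV(t_1)^\top\bfV(t_0)$ giving $\bfU_1\bfU_1^\top\bfA(t_1)=\bfA(t_1)$, then the cancellations in the S- and L-steps) is the same route taken in that reference. The only point worth making explicit is that $\bfV_0$ and $\bfV(t_0)$ need not coincide but differ by an orthogonal $r\times r$ factor, since both have orthonormal columns spanning the co-range of $\bfA(t_0)=\bfY_0$, so the assumed invertibility of $\bfV(t_1)^\top\bfV(t_0)$ indeed transfers to $\bfV(t_1)^\top\bfV_0$.
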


The second remarkable property is the robustness of the algorithm to the presence of small singular values of the solution or its approximation. This is in contrast to standard integrators applied to \eqref{eq:projEq} or the equivalent differential equations for the factors $\bfU(t)$, $\bfS(t)$, $\bfV(t)$, which contain a factor $\bfS(t)^{-1}$ on the right-hand sides \cite[Prop.\,2.1]{KochLubich07}. Moreover, the local Lipschitz constant of the tangent space projection $\P(\cdot)$ is proportional to the inverse of the smallest nonzero singular value \cite[Lemma 4.2]{KochLubich07}.
The appearance of small singular values is typical in applications, because the smallest singular value retained in the approximation cannot be expected to be much larger than the largest discarded singular value of the solution, which needs to be small to obtain good accuracy of the low-rank approximation.

\begin{theorem}[{Robust error bound, \cite[Theorem 2.1]{KieriLubichWalach}}]
\label{thm:proj-split-robust}
	Let $\bfA(t)$ denote the solution of the matrix differential equation \eqref{eq:fullEq-mat}. Assume that  the following conditions hold in the Frobenius norm $\|\cdot\|=\|\cdot\|_F$:
	\begin{enumerate}
		\item 
		$\bfF$ is Lipschitz-continuous and bounded: for all $\bfY, \widetilde{\bfY} \in \mathbb{R}^{m \times n}$ and $0\le t \le T$,
		$$ 
			\| \bfF(t, \bfY) - \bfF(t, \widetilde{\bfY}) \| 
			\leq
			L \| \bfY - \widetilde{\bfY} \|,
			\qquad
			\| \bfF(t, \bfY) \| \leq B \ .
		$$
		
		\item
		The non-tangential part of $\bfF(t, \bfY)$ is $\varepsilon$-small:
		$$
		\| (\bfI - \P(\bfY)) \bfF(t, \bfY) \| \le \eps
		$$
		for all $\bfY \in \mathcal{M}$ in a neighbourhood of $\bfA(t)$ and $0\le t \le T$.
		
		\item
		The error in the initial value is $\delta$-small:
		$$
		\| \bfY_0 - \bfA_0 \| \le \delta.
		$$
	\end{enumerate}	
	Let $\bfY_n$ denote the rank-$r$ approximation to $\bfA(t_n)$ at $t_n=nh$ obtained after n steps of the projector-splitting integrator with step-size $h>0$.
	Then, the error satisfies for all $n$ with $t_n =  nh \leq T$
	$$ \| \bfY_n - \bfA(t_n) \| \leq c_0\delta + c_1 \varepsilon + c_2 h ,$$	
	where the constants $c_i$ only depend on $L, B,$ and $T$. In particular, the constants are independent of singular values of the exact or approximate solution. 
\end{theorem}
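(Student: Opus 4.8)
The plan is to estimate the error of one step of the integrator started from the exact solution value and then to accumulate these one-step errors by a discrete Gronwall argument (a ``Lady Windermere's fan''). For the resulting constants $c_i$ to be independent of the singular values, both the one-step error bound and the error-propagation estimate must themselves be singular-value-independent. I would run the argument as an induction over the step index $n$, carrying along the hypothesis that $\bfY_n$ lies in the neighbourhood of $\bfA(t_n)$ in which assumption (ii) — the $\eps$-smallness of $(\bfI-\P(\bfY))\bfF(t,\bfY)$ — holds; the step-$(n+1)$ estimate must then be sharp enough to place $\bfY_{n+1}$ in the corresponding neighbourhood of $\bfA(t_{n+1})$, which is where smallness of $h$, $\eps$ and $\delta$ enters.

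For the one-step error I would introduce, on $[t_n,t_{n+1}]$, the reference trajectory $\bfB(t)$ solving the projected differential equation $\dt\bfB(t)=\P(\bfB(t))\bfF(t,\bfB(t))$ with $\bfB(t_n)=\bfY_n$, which keeps $\bfB(t)$ of rank $r$, and split $\bfY_{n+1}-\bfA(t_{n+1})=(\bfY_{n+1}-\bfB(t_{n+1}))+(\bfB(t_{n+1})-\bfA(t_{n+1}))$. The second term is a purely continuous perturbation estimate: differentiating $\|\bfA-\bfB\|$, bounding $\langle\bfA-\bfB,\bfF(t,\bfA)-\bfF(t,\bfB)\rangle\le L\|\bfA-\bfB\|^2$ and using that $(\bfI-\P(\bfB))\bfF(t,\bfB)$ is orthogonal to the tangential part of $\bfA-\bfB$ and is $\eps$-small yields $\tfrac{\d}{\d t}\|\bfA-\bfB\|\le L\|\bfA-\bfB\|+\eps$, hence $\|\bfB(t_{n+1})-\bfA(t_{n+1})\|\le\e^{Lh}\|\bfY_n-\bfA(t_n)\|+c\,h\eps$ with $c$ depending only on $L,T$. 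This term also carries the error propagation, through its dependence on $\bfB(t_n)=\bfY_n$.

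The first term, $\bfY_{n+1}-\bfB(t_{n+1})$, is where the exactness property is decisive. One step of the integrator applied to $\dt\bfY=\P(\bfY)\dt\bfB(t)$ — i.e.\ with the purely time-dependent right-hand side $\dt\bfB(t)$ — reproduces $\bfB(t_{n+1})$ exactly by Theorem~\ref{thm:proj-split-exact}, provided the $r\times r$ matrix of inner products of the right factors of $\bfB(t_{n+1})$ and $\bfB(t_n)$ is invertible. The integrator we actually run uses $\bfF(t,\cdot)$ in place of $\dt\bfB(t)$; but the $\mathbf K$-, $\mathbf S$- and $\mathbf L$-substep differential equations have right-hand sides bounded by $B$ and Lipschitz with constant $L$ — crucially \emph{without} any factor $\bfS^{-1}$ — so all substep iterates stay within $\bigo(h)$ of $\bfB(t)$, where $\bfF(t,\bfY)-\dt\bfB(t)=\bfF(t,\bfY)-\P(\bfB(t))\bfF(t,\bfB(t))$ is bounded by $\bigo(h)+\eps$. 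Stability of the three substeps should then give $\|\bfY_{n+1}-\bfB(t_{n+1})\|\le c\,h(\eps+h)$, and combining the two terms yields the recursion $e_{n+1}\le\e^{Lh}e_n+c\,h(\eps+h)$ for $e_n=\|\bfY_n-\bfA(t_n)\|$ with $e_0\le\delta$; solving this discrete Gronwall inequality for $t_n=nh\le T$ produces $e_n\le c_0\delta+c_1\eps+c_2 h$ with $c_i$ depending only on $L,B,T$.

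The main obstacle is exactly that every constant in this chain must be made independent of the singular values, and a naive execution of the sketch above does not achieve it. The difficulty is concentrated in the invertibility hypothesis of Theorem~\ref{thm:proj-split-exact} for the reference trajectory $\bfB$: the right-singular subspace of $\bfB(t)$ can rotate at a rate of order $B/\sigma_r$, so the crude estimate only secures invertibility — and hence the \emph{sharp} $\bigo(h(\eps+h))$ bound for $\bfY_{n+1}-\bfB(t_{n+1})$, rather than the merely $\bigo(h)$ bound that boundedness gives for free — under the unwanted step-size restriction $h\lesssim\sigma_r/B$; the QR factorisations of the slim matrices $\mathbf K(t_1)$ and $\mathbf L(t_1)$, which are ill-conditioned when $\sigma_r$ is small, threaten the Lipschitz stability of a step in the same way. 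Removing these dependencies is the technical heart of the matter: instead of using the substeps and the exactness property as a black box, one must carry the one-step analysis through the $\mathbf K$-, $\mathbf S$- and $\mathbf L$-substeps simultaneously and show that every contribution in which $1/\sigma_r$ threatens to appear is multiplied by a compensating factor of size $\bigo(\sigma_r)$, so that the singular-value dependence cancels out of the final bound. This is the step I expect to be the main difficulty, and carrying it out carefully is what the theorem genuinely requires.
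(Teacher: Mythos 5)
First, a point of reference: the paper does not prove this statement at all --- Theorem~\ref{thm:proj-split-robust} is quoted from \cite[Theorem 2.1]{KieriLubichWalach}, so the only proof to compare with is the one given there (parts of which the present paper reuses, e.g.\ the bound $\|\bfU_1\bfS_1^{\mathrm{ps}}\bfV_1^{\mathrm{ps},\top}-\bfA_1\|\le\vartheta(h,\eps)$ in the proof of Lemma~\ref{lem:sym-aux}).

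Your global frame (local error plus Lady Windermere's fan, with an induction keeping $\bfY_n$ in the neighbourhood where assumption 2 applies) is right, and the continuous comparison $\tfrac{\d}{\d t}\|\bfA-\bfB\|\le L\|\bfA-\bfB\|+\eps$ is sound. The genuine gap is the step you flag and then leave open: the singular-value-independent bound $\|\bfY_{n+1}-\bfB(t_{n+1})\|\le c\,h(h+\eps)$ is precisely the content of the theorem, and the route you choose cannot deliver it. Taking the projected flow $\bfB(t)$ as local reference builds the obstruction in from the start: $\sigma_r(\bfB(t))$ can decrease at rate up to $B$, so $\bfB$ is only guaranteed to stay on $\calM_r$ (and its right singular subspace, whose rotation rate is of order $B/\sigma_r$, to satisfy the invertibility hypothesis of Theorem~\ref{thm:proj-split-exact}) for $h\lesssim\sigma_r/B$; and the ``stability of the three substeps'' is not singular-value independent either, since a perturbation of size $\eta$ in $\mathbf{K}(t_1)$ perturbs the orthonormal factor $\bfU_1$ --- which then enters the $\bfS$- and $\mathbf{L}$-substeps nonlinearly --- by an amount of order $\eta/\sigma_r(\mathbf{K}(t_1))$. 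Your closing remark that all $1/\sigma_r$ contributions must be shown to carry compensating $\bigo(\sigma_r)$ factors states the goal, not an argument; as written, the sketch proves the estimate only under a step-size restriction $h\lesssim\sigma_r/B$, which is exactly what the theorem excludes. The proof in \cite{KieriLubichWalach} avoids the manifold reference trajectory altogether: the local error is measured directly against the full solution $\bfA(t)$ started from the same rank-$r$ value, the right-hand side is split into its tangential and $\eps$-small normal parts, and the analysis is carried through the $\mathbf{K}$-, $\bfS$-, $\mathbf{L}$-substeps (using the exactness property for an auxiliary problem) in such a way that only products such as $\mathbf{K}=\bfU\hat\bfS$, $\mathbf{L}=\bfV\bfS^\top$ and $\bfU_1\bfU_1^\top\bfA_1$ are ever estimated, never the orthonormal factors themselves; this is how the bound $\vartheta(h,\eps)=(4e^{Lh}BL+9BL)h^2+(3e^{Lh}+4)\eps h$ is obtained free of $1/\sigma_r$. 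So your proposal identifies where the difficulty lies but does not supply the idea that resolves it.
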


It is further shown in \cite[Section 2.6.3]{KieriLubichWalach} that an inexact solution of the matrix differential equations in the projector-splitting integrator
leads to an additional error that is bounded in terms of the local errors in the inexact substeps, again with constants that do not depend on small singular values.

Numerical experiments with the matrix projector-splitting integrator and comparisons with standard numerical integrators are reported in \cite{LubichOseledets,KieriLubichWalach}. These experiments show good behaviour also for spatially discretized partial differential equations where the Lipschitz constant becomes large, a case that as of now is not covered by the theory.

\section{Symmetric and skew-symmetric matrices: a structure-preserving integrator for dynamical low-rank approximation} 
\label{sec:Sym}
We now assume that the right-hand side function in \eqref{eq:fullEq-mat} is such that
\begin{equation} \label{F-sym}
\text{$\bfF(t,\bfY)$ is (skew-)symmetric whenever $\bfY$ is (skew-)symmetric.}
\end{equation}
This condition ensures that the solutions to the matrix differential equation \eqref{eq:fullEq-mat} and the projected differential equation \eqref{eq:projEq} are (skew-)symmetric provided the initial values are (skew-)symmetric. For  \eqref{eq:projEq}, this is seen from formula \eqref{P} for the tangent space projection with equal left and right factors $\bfV=\bfU$ in the decomposition \eqref{USV} of the (skew-)symmetric rank-$r$ matrix $\bfY=\bfU\bfS\bfU^\top$.

While the projector-splitting integrator for dynamical low-rank approximation described in the previous section has favourable properties, it does \emph{not} preserve symmetry or skew-symmetry of the solution $\bfA(t)$ to \eqref{eq:fullEq-mat}.

\subsection{(Skew-)symmetry preserving integrator} 
We now propose a modified integrator that preserves symmetry and skew-symm\-etry and still retains the exactness and robustness properties of the projector-splitting integrator.
A step with this integrator consists of two substeps. The first substep is identical to the first substep ($\textbf{K}$-step) of the projector-splitting integrator: it updates $\textbf{K}=\bfU\bfS$ in the decomposition $\bfY=\bfU \bfS \bfU^\top$. The second substep is a substantially modified update of $\bfS$, which can be viewed as a Galerkin approximation in the basis provided by the first substep.

Given $\bfY_0 = \bfU_0 \bfS_0 \bfU_0^\top$ with a (skew-)symmetric $r\times r$-matrix $\bfS_0$ at time $t_0$, we compute  the factorization $\bfY_1 = \bfU_1 \bfS_1 \bfU_1^\top$ with a (skew-)symmetric $r\times r$-matrix $\bfS_1$ at time $t_1=t_0+h$ by the following algorithm:

\hfill \break
\begin{algorithm}[H]\label{alg:nestedKSL}
	\caption{One time step of the (skew-)symmetry preserving integrator}
	\label{algntucker}
	\KwData{ $\bfY^0 = \bfU_0 \bfS_0 \bfU_0^\top $ in factorized form, function $\bfF(t,\bfY)$, $t_0$, $t_1$}
	\KwResult{ $\bfY_1 = \bfU_1 \bfS_1 \bfU_1^\top$ in factorized form}
	\Begin{
	 Integrate from $t=t_0$ to $t_1$ the $n \times r$ matrix differential equation
	 		\begin{equation*}
	 			\dot{\textbf{K}}(t) = \bfF(t, \textbf{K}(t) \bfU_0^\top)  {\bfU}_0,
	 			\qquad
	 			\textbf{K}(t_0) = \bfU_0 \bfS_0.
	 		\end{equation*}
	 		
	 		Compute a QR-factorization $ \textbf{K}(t_1) = \bfU_1 \textbf{R}$.
	 		
	 		\
	 
	 		Integrate from $t=t_0$ to $t_1$ the $r \times r$ differential equation
	 				\begin{align*}
	 					&\dot{\bfS}(t) = \bfU_1^\top \bfF(t, \bfU_1 \bfS(t) \bfU_1^\top)  {\bfU}_1, 
	 					\\
	 					&\bfS(t_0) = \bfU_1^\top \bfY_0  {\bfU}_1 = ( \bfU_1^\top\bfU_0) \bfS_0 ( \bfU_1^\top\bfU_0)^\top.
	 							\end{align*}
	 		Set $\bfS_1=\bfS(t_1)$.
	} 
\end{algorithm}
\vspace{10pt}

%
	To continue in time,  we take $\bfY_1$ as starting value for the next step and perform another step of the integrator. 
	
	Note that in this integrator the factor $\textbf{R}$ in the $QR$-decomposition of the first substep is not reused in the second substep, in contrast to the projector-splitting integrator. The computational cost is approximately halved, since the $\textbf{L}$-step is not needed here.
		
	We will now show that the (skew-)symmetric integrator retains the exactness and robustness properties of the projector-splitting integrator, using these known results in the proof.

\subsection{Exactness property of the (skew-)symmetric integrator} The exactness result
Theorem~\ref{thm:proj-split-exact} extends in the following way.

\begin{theorem}[Exactness property]
\label{thm:sym-exact}
	Let $\bfA(t) \in \mathbb{R}^{n \times n}$ be (skew-)symmetric and of rank~$r$  for $t_0 \leq t \leq t_1$,
	so that $\bfA(t)$ has a factorization \eqref{USV} with equal left and right factors, $\bfA(t)=\bfU(t)\bfS(t)\bfU(t)^\top$. 
	Moreover, assume  that the $r\times r$ matrix $ \bfU(t_1)^\top \bfU(t_0)$ is invertible. With $\bfY_0 = \bfA(t_0)$, the (skew-)symmetric integrator for 
	$\dt\bfY(t)=\P(\bfY(t))\dt \bfA(t)$ is then exact: $ \bfY_1 = \bfA(t_1)$.
\end{theorem}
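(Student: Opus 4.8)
The plan is to exploit that in this setting $\bfF(t,\bfY)=\dt{\bfA}(t)$ does not depend on $\bfY$, so that the two matrix differential equations in Algorithm~\ref{alg:nestedKSL} have right-hand sides that are total derivatives and can be integrated exactly by the fundamental theorem of calculus; the rest is linear algebra. Without loss of generality take $\bfU_0=\bfU(t_0)$ and $\bfS_0=\bfS(t_0)$, so that $\bfY_0=\bfA(t_0)$ is consistent with the prescribed factorization.

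First I would analyse the K-step. Since $\dot{\textbf{K}}(t)=\dt{\bfA}(t)\bfU_0$, integration gives $\textbf{K}(t_1)=\bfU_0\bfS_0+\bigl(\bfA(t_1)-\bfA(t_0)\bigr)\bfU_0$. Because $\bfA(t_0)\bfU_0=\bfU(t_0)\bfS(t_0)\bfU(t_0)^\top\bfU_0=\bfU_0\bfS_0$, the first and last terms cancel and $\textbf{K}(t_1)=\bfA(t_1)\bfU_0=\bfU(t_1)\,\bigl(\bfS(t_1)\,\bfU(t_1)^\top\bfU_0\bigr)$. The $r\times r$ factor in parentheses is a product of the invertible matrix $\bfS(t_1)$ (since $\bfA(t_1)$ has rank $r$) with the matrix $\bfU(t_1)^\top\bfU(t_0)$, which is invertible by hypothesis; hence $\textbf{K}(t_1)$ has full column rank and its column space coincides with that of $\bfU(t_1)$. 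Consequently the orthonormal factor $\bfU_1$ from the QR factorization $\textbf{K}(t_1)=\bfU_1\textbf{R}$ spans the same subspace as $\bfU(t_1)$, which yields the key identity $\bfU_1\bfU_1^\top=\bfU(t_1)\bfU(t_1)^\top$ (both sides are the orthogonal projector onto that $r$-dimensional subspace). This is precisely where the invertibility assumption enters, exactly as in Theorem~\ref{thm:proj-split-exact}.

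Next I would treat the S-step. Its right-hand side $\bfU_1^\top\dt{\bfA}(t)\bfU_1$ is again a total derivative, so with $\bfS(t_0)=\bfU_1^\top\bfA(t_0)\bfU_1$ we obtain $\bfS_1=\bfS(t_1)=\bfU_1^\top\bfA(t_0)\bfU_1+\bfU_1^\top\bigl(\bfA(t_1)-\bfA(t_0)\bigr)\bfU_1=\bfU_1^\top\bfA(t_1)\bfU_1$, which is (skew-)symmetric because $\bfA(t_1)$ is. Finally $\bfY_1=\bfU_1\bfS_1\bfU_1^\top=\bigl(\bfU_1\bfU_1^\top\bigr)\bfA(t_1)\bigl(\bfU_1\bfU_1^\top\bigr)=\bigl(\bfU(t_1)\bfU(t_1)^\top\bigr)\bfA(t_1)\bigl(\bfU(t_1)\bfU(t_1)^\top\bigr)$, and since $\mathrm{range}\,\bfA(t_1)=\mathrm{range}\,\bfA(t_1)^\top=\mathrm{range}\,\bfU(t_1)$ (using $\bfA(t_1)^\top=\pm\bfA(t_1)$), the projector $\bfU(t_1)\bfU(t_1)^\top$ acts as the identity on $\bfA(t_1)$ from both sides, so $\bfY_1=\bfA(t_1)$.

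I do not expect a serious obstacle: the argument is a direct computation once one observes that the explicit-$\bfA$ assumption turns both substep ODEs into exact quadratures. The one point that must be handled with care is the range/invertibility step establishing $\bfU_1\bfU_1^\top=\bfU(t_1)\bfU(t_1)^\top$. An alternative would be to try to deduce the statement from Theorem~\ref{thm:proj-split-exact} by noting that the K-step coincides with that of the projector-splitting integrator, but since the S-step here differs (opposite sign, different initial value, and no reuse of $\textbf{R}$), the self-contained argument above appears cleaner.
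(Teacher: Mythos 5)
Your argument is correct, and its overall skeleton matches the paper's: the K-step identifies the right range, the S-step is an exact quadrature giving $\bfS_1=\bfU_1^\top\bfA(t_1)\bfU_1$, and then $\bfY_1=\bfU_1\bfU_1^\top\bfA(t_1)\bfU_1\bfU_1^\top=\bfA(t_1)$. The one genuine difference is how the key range identity is obtained. The paper does exactly what you dismissed as the ``alternative'': it observes that the first substep coincides with the K-step of the projector-splitting integrator and invokes Theorem~\ref{thm:proj-split-exact} to conclude that $\bfU_1$ and $\bfA(t_1)$ have the same range, i.e.\ $\bfU_1\bfU_1^\top\bfA(t_1)=\bfA(t_1)$; the fact that the S-step differs from the projector-splitting one is irrelevant, because only this first-substep conclusion is borrowed. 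You instead re-derive this fact from scratch: since $\bfF(t,\bfY)=\dt\bfA(t)$ is independent of $\bfY$, $\textbf{K}(t_1)=\bfA(t_1)\bfU_0=\bfU(t_1)\bigl(\bfS(t_1)\bfU(t_1)^\top\bfU_0\bigr)$, and the invertibility of $\bfS(t_1)$ and of $\bfU(t_1)^\top\bfU(t_0)$ gives $\bfU_1\bfU_1^\top=\bfU(t_1)\bfU(t_1)^\top$. Your route is self-contained and makes explicit where the invertibility hypothesis enters (it is hidden inside the cited theorem in the paper's version); the paper's route is shorter and emphasizes that the new integrator inherits exactness from the known one. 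Two small remarks: your ``without loss of generality'' $\bfU_0=\bfU(t_0)$ is unnecessary, since $\bfA(t_0)\bfU_0=\bfY_0\bfU_0=\bfU_0\bfS_0$ holds for any factorization $\bfY_0=\bfU_0\bfS_0\bfU_0^\top$ with orthonormal $\bfU_0$; and in the final step the (skew-)symmetry of $\bfA(t_1)$ (equivalently, the equal-factor factorization, which makes the row space also equal to the range of $\bfU(t_1)$) is exactly the ingredient the paper uses to absorb the projector on the right, so your reasoning there coincides with the paper's.
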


\begin{proof}
	We note that the projector-splitting integrator and the (skew-)symmetric integrator have the same first step. Let $\bfU_1 \in \mathbb{R}^{n \times r}$ be the matrix with orthonormal columns computed in the first substep. Due to the exactness of the matrix projector-splitting integrator as given by Theorem~\ref{thm:proj-split-exact} we know  that $\bfU_1$ and $\bfA(t_1)$ have the same range and therefore
\begin{equation} \label{UUA}
\bfU_1 \bfU_1^\top \bfA(t_1) = \bfA(t_1). 
\end{equation}
	Denoting $\Delta \bfA = \bfA(t_1) - \bfA(t_0)$, the (skew-)symmetric integrator provides for the second substep the solution
	$$ \bfS_1 = \bfU_1^\top \bfY_0 {\bfU}_1 + \bfU_1^\top (\bfA(t_1) - \bfA(t_0)) {\bfU}_1 =
	\bfU_1^\top \bfA(t_1)   {\bfU}_1, $$
	since $\bfY_0=\bfA(t_0)$.
	The result  after a time step  of the (skew-)symmetric integrator is
	$$ \bfY_1 =  \bfU_1 \bfS_1 \bfU_1^\top=\bfU_1 \bfU_1^\top \bfA(t_1)  (\bfU_1 \bfU_1^\top) = \bfA(t_1) ,$$
	where the last equality holds because of \eqref{UUA} and the (skew-)symmetry of $\bfA(t_1)$. \qed
\end{proof}

\subsection{Robustness to small singular values}
The error bound of
Theorem~\ref{thm:proj-split-robust} extends in the following way.

\begin{theorem}[{Robust error bound}]
\label{thm:sym-robust}
	Let $\bfA(t)$ denote the (skew-)symmetric solution of the matrix differential equation \eqref{eq:fullEq-mat} with $\bfF$ satisfying \eqref{F-sym}. Assume  that conditions $1$.-$\,3$. of Theorem~$\ref{thm:proj-split-robust}$ are fulfilled.
	
	Let $\bfY_n$ denote the rank-$r$ approximation to $\bfA(t_n)$ at $t_n=nh$ obtained after n steps of the (skew-)symmetric integrator of Algorithm~\ref{alg:nestedKSL} with step-size $h>0$.
	Then, the error satisfies for all $n$ with $t_n =  nh \leq T$
	$$ \| \bfY_n - \bfA(t_n) \| \leq c_0\delta + c_1 \varepsilon + c_2 h ,$$	
	where the constants $c_i$ only depend on $L, B,$ and $T$. In particular, the constants are independent of singular values of the exact or approximate solution. 
\end{theorem}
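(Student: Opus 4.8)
The plan is to follow the proof of Theorem~\ref{thm:proj-split-robust} in \cite{KieriLubichWalach}, with the exactness result for the (skew-)symmetric integrator, Theorem~\ref{thm:sym-exact}, taking over the role played there by the exactness result for the projector-splitting integrator, Theorem~\ref{thm:proj-split-exact}. As usual, the global error bound is deduced from a one-step error estimate together with a stability estimate, combined by a discrete Gronwall argument over the $n\le T/h$ steps; the whole point is that all constants stay independent of the singular values of $\bfA(t)$ and of its approximations. Two elementary facts are used throughout: by \eqref{F-sym}, the exact solution $\bfA(t)$ of \eqref{eq:fullEq-mat}, every iterate $\bfY_n$, and all auxiliary trajectories below are (skew-)symmetric, so that factorizations $\bfY=\bfU\bfS\bfU^\top$ with equal left and right factors are available and Theorem~\ref{thm:sym-exact} applies; and, since $\bfU_0$ and $\bfU_1$ have orthonormal columns, the right-hand sides of the $\textbf{K}$-equation and of the modified $\bfS$-equation in Algorithm~\ref{alg:nestedKSL} are again Lipschitz with constant $L$ and bounded by $B$, while the QR-factorization producing $\bfU_1$ and the map $\bfY_0\mapsto\bfU_1^\top\bfY_0\bfU_1$ do not increase norms.

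For the one-step estimate I would argue as in \cite{KieriLubichWalach}. Fix a step $[t_0,t_1]$ with starting value $\bfY_0$, a (skew-)symmetric rank-$r$ matrix near $\bfA(t_0)$, and introduce two reference trajectories starting from $\bfY_0$ at $t_0$: the unprojected flow $\bfB(t)$ solving $\dot{\bfB}=\bfF(t,\bfB)$, and the projected flow $\bfC(t)$ solving \eqref{eq:projEq}. The first is (skew-)symmetric by \eqref{F-sym} and satisfies $\|\bfB(t_1)-\bfA(t_1)\|\le\mathrm{e}^{Lh}\|\bfY_0-\bfA(t_0)\|$ by Gronwall; the second is (skew-)symmetric and of rank $r$, and from $\dot{\bfC}-\dot{\bfB}=\P(\bfC)\bigl(\bfF(t,\bfC)-\bfF(t,\bfB)\bigr)-\bigl(\bfI-\P(\bfC)\bigr)\bfF(t,\bfB)$ together with $\|\P(\cdot)\|=1$, the Lipschitz bound for $\bfF$ and the $\varepsilon$-smallness assumption one gets $\tfrac{\d}{\d t}\|\bfC-\bfB\|\le 2L\|\bfC-\bfB\|+\varepsilon$, hence $\|\bfC(t)-\bfB(t)\|\le c\,h\varepsilon$ on $[t_0,t_1]$ with $c$ independent of singular values (no Lipschitz bound on $\P$ is used). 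For $h$ small enough that the $r\times r$ matrix $\bfU_{\bfC}(t_1)^\top\bfU_{\bfC}(t_0)$ — with $\bfC(t)=\bfU_{\bfC}(t)\bfS_{\bfC}(t)\bfU_{\bfC}(t)^\top$ — required in Theorem~\ref{thm:sym-exact} is invertible (true by continuity, since it is the identity at $h=0$), that theorem shows that one step of the (skew-)symmetric integrator run with the given right-hand side $\dot{\bfC}(t)$ and started at $\bfY_0$ returns exactly $\bfC(t_1)$.

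It then remains to compare one step of the integrator run with $\bfF(t,\cdot)$ with one step run with $\dot{\bfC}(t)$, both from $\bfY_0$: in each substep the two right-hand sides differ only through $\bfF$ minus $\dot{\bfC}=\P(\bfC)\bfF(\cdot,\bfC)$ evaluated along matrices that remain $O(h)$-close to $\bfC(t)$, which by the $\varepsilon$-smallness and Lipschitz continuity of $\bfF$ is $O(\varepsilon+h)$ in norm; propagating this through the substeps — once the $\textbf{K}$-step QR-factorization is handled with the care discussed in the next paragraph — yields $\|\bfY_1-\bfC(t_1)\|\le c\,h(\varepsilon+h)$. Combining with the two distance estimates gives the one-step bound $\|\bfY_1-\bfA(t_1)\|\le\mathrm{e}^{Lh}\|\bfY_0-\bfA(t_0)\|+c\,h(\varepsilon+h)$, and summing over the $n$ steps together with $\|\bfY_0-\bfA_0\|\le\delta$ produces $\|\bfY_n-\bfA(t_n)\|\le c_0\delta+c_1\varepsilon+c_2h$ with $c_0=\mathrm{e}^{LT}$ and $c_1,c_2$ depending only on $L,B,T$.

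I expect the genuine difficulty — exactly as for the projector-splitting integrator in \cite{KieriLubichWalach} — to be the robustness of this one-step analysis, that is, controlling how small singular values enter the intermediate orthonormal factor $\bfU_1$: the QR-factor of the $\textbf{K}$-step depends on its data with a sensitivity a priori proportional to the inverse of the smallest retained singular value, and keeping this out of the final constants requires the separation of the ``large'' and ``small'' singular directions carried out in the proof of \cite[Theorem 2.1]{KieriLubichWalach}, using that the small directions contribute negligibly to $\bfY_1$; the borderline case of nearly rank-deficient data, where the projected reference $\bfC(t)$ may fail to have full rank on $[t_0,t_1]$, is handled by the same perturbation argument as there. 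The only genuinely new point relative to that proof is that the $\bfS$-substep of Algorithm~\ref{alg:nestedKSL} is the Galerkin equation $\dot{\bfS}=\bfU_1^\top\bfF(t,\bfU_1\bfS\bfU_1^\top)\bfU_1$ with the single basis $\bfU_1$ and initial value $\bfU_1^\top\bfY_0\bfU_1$, in place of the $\bfS$- and $\textbf{L}$-steps of the projector-splitting integrator; since this substep has the same Lipschitz and boundedness structure and is covered by Theorem~\ref{thm:sym-exact}, the estimates of \cite{KieriLubichWalach} carry over with the obvious modifications, and (skew-)symmetry is preserved at every stage because $\bfF$ preserves it by \eqref{F-sym}.
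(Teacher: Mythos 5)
Your global strategy (one-step error bound with singular-value-independent constants, then Lady Windermere's fan) matches the paper, but the route you take to the local error has genuine gaps at exactly the robustness-critical points, and these are the whole content of the theorem. First, applying Theorem~\ref{thm:sym-exact} to the projected flow $\bfC(t)$ requires the invertibility of $\bfU_{\bfC}(t_1)^\top\bfU_{\bfC}(t_0)$; your justification ``by continuity, since it is the identity at $h=0$'' is not robust, because the range of $\bfC(t)$ can rotate at a speed proportional to $\|\bfS_{\bfC}(t)^{-1}\|$ (cf.\ \cite{KochLubich07}), so continuity only gives invertibility for $h$ small compared with the smallest retained singular value -- precisely the restriction the theorem must avoid. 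Second, and more seriously, your comparison of one integrator step driven by $\bfF$ with one step driven by $\dt\bfC$ needs a stability bound for a single step under perturbation of the right-hand side, with constants independent of singular values. The output of the step depends on the data through the QR factor $\bfU_1$ of $\textbf{K}(t_1)$ -- both in the Galerkin ODE and in its initial value $\bfU_1^\top\bfY_0\bfU_1$ -- and $\bfU_1$ (equivalently the projector $\bfU_1\bfU_1^\top$) depends on $\textbf{K}(t_1)$ with a Lipschitz constant of order $1/\sigma_r(\textbf{K}(t_1))$, which is not controlled. You name this as ``the genuine difficulty'' but resolve it only by deferring to a purported ``separation of large and small singular directions'' in the proof of \cite{KieriLubichWalach}; that proof, however, contains no QR-perturbation argument of this kind -- its robustness rests on never comparing two different orthonormal factors, only on range inclusions and defect estimates against the full flow. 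So the central step of your plan is asserted, not proved.

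The paper closes exactly this gap by a different and shorter device, and in particular does not use the exactness theorem in the robustness proof at all. Since the first substep of Algorithm~\ref{alg:nestedKSL} coincides with the K-step of the projector-splitting integrator, the known robust local error bound of that integrator is imported: with $\bfZ=\bfS_1^{\mathrm{ps}}\bfV_1^{\mathrm{ps},\top}$ one has $\|\bfU_1\bfZ-\bfA_1\|\le\vartheta(h,\eps)$, an orthogonal splitting gives $\|(\bfI-\bfU_1\bfU_1^\top)\bfA_1\|\le\vartheta$, and the (skew-)symmetry of $\bfA_1$ upgrades this to the two-sided bound $\|\bfU_1\bfU_1^\top\bfA_1\bfU_1\bfU_1^\top-\bfA_1\|\le 2\vartheta$ (Lemma~\ref{lem:sym-aux}). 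All remaining comparisons are then done in the \emph{fixed} basis $\bfU_1$: the Galerkin S-equation is compared with $\widetilde\bfS(t)=\bfU_1^\top\bfA(t)\bfU_1$, whose differential equation has a defect bounded by $2L(Bh+\vartheta)$, so Gronwall yields $\|\bfS_1-\bfU_1^\top\bfA_1\bfU_1\|\le e^{Lh}\,2L(Bh+\vartheta)h$ and hence a local error $O\bigl(h(\eps+h)\bigr)$ with constants depending only on $L$, $B$ and a step-size bound; no QR perturbation and no invertibility hypothesis enter. If you wish to salvage your outline, replace the two-run comparison by this one-basis argument: quantify once how well the range of the computed $\bfU_1$ captures $\bfA_1$ (that is where the projector-splitting theory is used), and never differentiate the QR factorization with respect to its data.
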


As in \cite[Section 2.6.3]{KieriLubichWalach}, it can be further shown that an inexact solution of the matrix differential equations in the projector-splitting integrator
leads to an additional error that is bounded in terms of the local errors in the inexact substeps, again with constants that do not depend on small singular values.

\begin{remark}
The method of Algorithm~\ref{alg:nestedKSL} is of order~1, and higher order can be obtained simply by composition as, e.g., in 
\cite[Section II.4]{HLW}. 
However, like for the projector-splitting integrator of \cite{LubichOseledets}, it is not known if an error bound of  higher order in the step-size $h$ can be obtained with constants that are independent of small singular values. Numerical experiments with the Strang version of the projector-splitting integrator, which is of order~2,  indicate an order reduction in some examples with very small singular values \cite{ostermann2018convergence}.
\end{remark}
%
%
%
%
%
%
%
%
%
We now prepare for the proof of Theorem~\ref{thm:sym-robust}, which views the (skew-)symmetric integrator as a perturbed variant of the projector-splitting integrator.

Let us introduce the quantity   
$$  \vartheta(h, \varepsilon) := (4e^{Lh} BL  + 9BL)h^2 + (3e^{Lh}+4)\varepsilon h \ , $$
which represents the local error bound after one time step of the projector-splitting integrator, as proved in  \cite[Theorem 2.1]{KieriLubichWalach}.

In the following, we denote by $\bfU_1\in\R^{n\times r}$ the matrix with orthonormal columns obtained in the first substep of the integrator. We recall that the matrix projector-splitting and the (skew-)symmetric integrator have the first substep in common.
	
We denote by $\bfA_1$ the (skew-)symmetric solution at time $t_1$ of the full problem (\ref{eq:fullEq-mat}), where we consider the initial data  to coincide with the (skew-)symmetric rank-$r$ matrix $\bfY_0$. 
For the local error analysis, the following lemma is needed.

\begin{lemma}\label{lem:sym-aux}
	Let $\bfU_1, \bfA_1$ be defined as above. The following estimate holds:
	$$ \| \bfU_1 \bfU_1^\top \bfA_1 \bfU_1 \bfU_1^\top - \bfA_1 \| \leq 2\vartheta(h, \varepsilon). $$
\end{lemma}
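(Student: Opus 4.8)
The plan is to relate the quantity $\|\bfU_1\bfU_1^\top \bfA_1 \bfU_1\bfU_1^\top - \bfA_1\|$ to the local error of the projector-splitting integrator, whose one-step error is bounded by $\vartheta(h,\varepsilon)$ by \cite[Theorem 2.1]{KieriLubichWalach}. First I would recall that the matrix $\bfU_1$ with orthonormal columns produced by the common first substep is precisely the orthogonal factor in the QR-decomposition of $\textbf{K}(t_1)$, and that after the full projector-splitting step one obtains a rank-$r$ matrix $\bfY_1^{\mathrm{PS}}=\bfU_1\bfS_1^{\mathrm{PS}}(\bfV_1^{\mathrm{PS}})^\top$ whose \emph{range} is contained in the range of $\bfU_1$; hence $\bfU_1\bfU_1^\top \bfY_1^{\mathrm{PS}} = \bfY_1^{\mathrm{PS}}$. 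The key point to exploit is that the orthogonal projection $\bfP_1:=\bfU_1\bfU_1^\top$ satisfies $\|\bfP_1\|\le 1$ and that $\bfY_1^{\mathrm{PS}}$ is a good approximation to $\bfA_1$: by the local error bound, $\|\bfY_1^{\mathrm{PS}}-\bfA_1\|\le\vartheta(h,\varepsilon)$.

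Next I would estimate the left-projected error. Write
$$
\|\bfP_1\bfA_1 - \bfA_1\| \le \|\bfP_1\bfA_1 - \bfP_1\bfY_1^{\mathrm{PS}}\| + \|\bfP_1\bfY_1^{\mathrm{PS}} - \bfA_1\| = \|\bfP_1(\bfA_1-\bfY_1^{\mathrm{PS}})\| + \|\bfY_1^{\mathrm{PS}} - \bfA_1\| \le 2\vartheta(h,\varepsilon),
$$
using $\bfP_1\bfY_1^{\mathrm{PS}}=\bfY_1^{\mathrm{PS}}$ and $\|\bfP_1\|\le1$. Actually one can do slightly better: since $\bfP_1\bfY_1^{\mathrm{PS}}=\bfY_1^{\mathrm{PS}}$, the matrix $\bfP_1\bfA_1-\bfA_1=-(\bfI-\bfP_1)\bfA_1=-(\bfI-\bfP_1)(\bfA_1-\bfY_1^{\mathrm{PS}})$, so in fact $\|\bfP_1\bfA_1-\bfA_1\|\le\vartheta(h,\varepsilon)$ directly, since $\|\bfI-\bfP_1\|\le1$. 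This is the cleaner route and I would use it.

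Finally I would insert the two-sided projection by a telescoping argument and use the (skew-)symmetry of $\bfA_1$, which guarantees $\bfA_1^\top=\pm\bfA_1$ and hence $\|\bfP_1\bfA_1\bfP_1 - \bfA_1\bfP_1\|=\|(\bfP_1\bfA_1-\bfA_1)\bfP_1\|=\|\bfP_1(\bfA_1^\top\bfP_1-\bfA_1^\top)^\top\| = \|(\bfA_1^\top\bfP_1-\bfA_1^\top)\|=\|\bfP_1\bfA_1^\top-\bfA_1^\top\|=\|\bfP_1\bfA_1-\bfA_1\|$; more simply, just estimate
$$
\|\bfP_1\bfA_1\bfP_1 - \bfA_1\| \le \|\bfP_1\bfA_1\bfP_1 - \bfP_1\bfA_1\| + \|\bfP_1\bfA_1 - \bfA_1\| \le \|\bfA_1\bfP_1-\bfA_1\| + \|\bfP_1\bfA_1-\bfA_1\|,
$$
and bound the first term by transposing and using symmetry: $\|\bfA_1\bfP_1-\bfA_1\| = \|(\bfP_1\bfA_1^\top-\bfA_1^\top)^\top\| = \|\bfP_1(\pm\bfA_1)-(\pm\bfA_1)\| = \|\bfP_1\bfA_1-\bfA_1\|$. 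Thus each term is at most $\vartheta(h,\varepsilon)$, giving the bound $2\vartheta(h,\varepsilon)$. The main obstacle — really the only subtle point — is making precise that the range of the first-substep output $\bfU_1$ contains the range of the full projector-splitting approximation $\bfY_1^{\mathrm{PS}}$ (so that $\bfP_1\bfY_1^{\mathrm{PS}}=\bfY_1^{\mathrm{PS}}$), which follows from the structure of the K/S/L steps (the S- and L-steps only modify the right factor and the core), and invoking the local error estimate $\|\bfY_1^{\mathrm{PS}}-\bfA_1\|\le\vartheta(h,\varepsilon)$ from \cite[Theorem 2.1]{KieriLubichWalach} with the correct initial data $\bfY_0$.
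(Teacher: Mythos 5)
Your proposal is correct and follows essentially the same route as the paper: both exploit that the first substep is shared with the projector-splitting integrator, invoke its local error bound $\|\bfU_1\bfS_1^{\mathrm{ps}}\bfV_1^{\mathrm{ps},\top}-\bfA_1\|\le\vartheta(h,\varepsilon)$ to deduce $\|\bfU_1\bfU_1^\top\bfA_1-\bfA_1\|\le\vartheta(h,\varepsilon)$, and then use the (skew-)symmetry of $\bfA_1$ with a triangle inequality to obtain the factor $2$. The only cosmetic difference is that you bound $\|(\bfI-\bfU_1\bfU_1^\top)\bfA_1\|$ via non-expansiveness of the complementary projection, whereas the paper uses the equivalent Pythagoras-type orthogonal splitting of the squared norm.
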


\begin{proof}
	The local error analysis in \cite{KieriLubichWalach} shows that the $r\times n$ matrix 
	$\bfZ=\bfS_1^\mathrm{ps}\bfV_1^{\mathrm{ps},\top}$, where $\bfS_1^\mathrm{ps}$ and $\bfV_1^\mathrm{ps}$ are the matrices computed in the third substep of the projector-splitting algorithm, satisfies
	\begin{equation*}
	\| \bfU_1 \textbf{Z} - \bfA_1 \| \leq \vartheta :=\vartheta(h, \varepsilon) .
	\end{equation*}
The square of the left-hand side can be split into two terms:
	\begin{equation*}
		\begin{aligned}
			\| \bfU_1 \textbf{Z} - \bfA_1 \|^2  
				&= \| \bfU_1 \textbf{Z} - \bfU_1 \bfU_1^\top \bfA_1 + \bfU_1 \bfU_1^\top \bfA_1 - \bfA_1 \| ^2 \\
				&= \| \bfU_1 \bfU_1^\top (\bfU_1 \textbf{Z} - \bfA_1) + (\textbf{I} - \bfU_1 \bfU_1^\top) \bfA_1 \| ^2 \\
				&= \| \bfU_1 \bfU_1^\top (\bfU_1 \textbf{Z} - \bfA_1) \| ^2 +  \|(\textbf{I} - \bfU_1 \bfU_1^\top) \bfA_1 \| ^2 .\\
		\end{aligned}
	\end{equation*} 
Hence, 
	$$ \| \bfU_1 \bfU_1^\top(\bfU_1 \textbf{Z} - \bfA_1) \| ^2 +  \|(\textbf{I} - \bfU_1 \bfU_1^\top) \bfA_1 \| ^2 \leq \vartheta^2 . $$
	From the second term it follows that
	$$ \| \bfU_1 \bfU_1^\top \bfA_1 - \bfA_1 \| \leq \vartheta . $$
	By  the (skew-)symmetry of $\bfA_1$, this implies 
	\begin{equation*}
		\begin{aligned}
			\| \bfU_1 \bfU_1^\top \bfA_1 \bfU_1 \bfU_1^\top - \bfA_1 \| 
			&= \| \bfU_1 \bfU_1^\top \bfA_1 \bfU_1 \bfU_1^\top -\bfU_1 \bfU_1^\top\bfA_1 +\bfU_1 \bfU_1^\top \bfA_1 -\bfA_1 \| \\
			&\leq \| \bfU_1 \bfU_1^\top(\bfA_1 \bfU_1 \bfU_1^\top - \bfA_1) \| + \|\bfU_1 \bfU_1^\top \bfA_1 -\bfA_1 \| \\
			&= \| \bfU_1 \bfU_1^\top(\bfU_1 \bfU_1^\top \bfA_1 - \bfA_1)^\top \| +  \|\bfU_1 \bfU_1^\top \bfA_1 -\bfA_1 \| \\
			&\leq 2\| \bfU_1 \bfU_1^\top \bfA_1 - \bfA_1 \| 			\\
			& \leq 2 \vartheta,
		\end{aligned}
	\end{equation*} 
	which yields the result. \qed
\end{proof}

In the following lemma, we show that the approximation given after one time step is $O(h(h + \varepsilon))$ close to the solution of system (\ref{eq:fullEq-mat}) when the starting values coincide.

\begin{lemma}[Local Error] \label{lem:loc-err} 
The following local error bound holds:
	$$ \| \bfY_1 - \bfA_1 \| \leq h(\hat c_1 \varepsilon  + \hat c_2 h) , $$
	where the constants only depend on $L$ and $B$ and a bound of the step size. In particular, the constants are independent of singular values of the exact or approximate solution. 
\end{lemma}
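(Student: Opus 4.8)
The plan is to bound $\|\bfY_1 - \bfA_1\|$ by comparing the output $\bfY_1 = \bfU_1 \bfS_1 \bfU_1^\top$ of Algorithm~\ref{alg:nestedKSL} against $\bfA_1$ in two stages: first replace $\bfA_1$ by its compression $\bfU_1 \bfU_1^\top \bfA_1 \bfU_1 \bfU_1^\top$ (controlled by Lemma~\ref{lem:sym-aux}), and then estimate the difference between $\bfS_1$, the exact solution at $t_1$ of the $r\times r$ differential equation $\dot\bfS = \bfU_1^\top \bfF(t, \bfU_1 \bfS \bfU_1^\top)\bfU_1$ with $\bfS(t_0) = \bfU_1^\top \bfY_0 \bfU_1$, and the ``ideal'' compressed solution $\bfU_1^\top \bfA_1 \bfU_1$. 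So I would write
\begin{equation*}
\|\bfY_1 - \bfA_1\| \le \|\bfU_1 \bfS_1 \bfU_1^\top - \bfU_1 \bfU_1^\top \bfA_1 \bfU_1 \bfU_1^\top\| + \|\bfU_1 \bfU_1^\top \bfA_1 \bfU_1 \bfU_1^\top - \bfA_1\| = \|\bfS_1 - \bfU_1^\top \bfA_1 \bfU_1\| + \|\bfU_1 \bfU_1^\top \bfA_1 \bfU_1 \bfU_1^\top - \bfA_1\|,
\end{equation*}
using that $\bfU_1$ has orthonormal columns so conjugation by $\bfU_1$ preserves the Frobenius norm. The second term is $\le 2\vartheta(h,\varepsilon)$ by Lemma~\ref{lem:sym-aux}, and since $\vartheta(h,\varepsilon) = O(h^2 + \varepsilon h) = O(h(h+\varepsilon))$, it has the required form.

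For the first term, the idea is to set up a Gronwall argument for the function $\bfE(t) := \bfS(t) - \bfU_1^\top \bfA(t) \bfU_1$, where $\bfA(t)$ solves the full problem~\eqref{eq:fullEq-mat} with $\bfA(t_0) = \bfY_0$ and $\bfS(t)$ solves the $r\times r$ equation above. At $t = t_0$ we have $\bfE(t_0) = \bfU_1^\top \bfY_0 \bfU_1 - \bfU_1^\top \bfA(t_0)\bfU_1 = 0$ since $\bfA(t_0) = \bfY_0$. Differentiating,
\begin{equation*}
\dot\bfE(t) = \bfU_1^\top \bfF(t, \bfU_1 \bfS(t) \bfU_1^\top)\bfU_1 - \bfU_1^\top \bfF(t, \bfA(t))\bfU_1,
\end{equation*}
so $\|\dot\bfE(t)\| \le L\|\bfU_1 \bfS(t)\bfU_1^\top - \bfA(t)\|$. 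Now I would split $\bfU_1 \bfS(t) \bfU_1^\top - \bfA(t) = \bfU_1 \bfE(t) \bfU_1^\top + (\bfU_1 \bfU_1^\top \bfA(t) \bfU_1 \bfU_1^\top - \bfA(t))$; the first piece has norm $\|\bfE(t)\|$, and the second is the ``compression defect'' of $\bfA(t)$. To control the compression defect for all $t \in [t_0, t_1]$ (not just $t_1$), I would note that $\|\bfA(t) - \bfA(t_0)\| \le Bh$ on the interval (since $\|\bfF\| \le B$), and $\bfA(t_0) = \bfY_0$ has range in that of $\bfU_0$; combining this with the near-invariance of the range across the K-step — more directly, combining the bound $\|\bfU_1 \bfU_1^\top \bfA(t_0) - \bfA(t_0)\| \le \vartheta$-type estimate, or simply reusing Lemma~\ref{lem:sym-aux} applied with $\bfA_1$ replaced by $\bfA(t)$ together with a Lipschitz-in-time estimate — gives $\|\bfU_1 \bfU_1^\top \bfA(t) \bfU_1 \bfU_1^\top - \bfA(t)\| \le 2\vartheta(h,\varepsilon) + O(Bh) = O(h(h+\varepsilon))$ uniformly on $[t_0,t_1]$. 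Feeding this into $\|\dot\bfE(t)\| \le L(\|\bfE(t)\| + O(h(h+\varepsilon)))$ and applying Gronwall over an interval of length $h$ yields $\|\bfE(t_1)\| \le h \cdot e^{Lh} \cdot L \cdot O(h(h+\varepsilon)) = O(h(h+\varepsilon))$, absorbing the $e^{Lh}$ into constants depending only on $L$ and the step-size bound.

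The main obstacle is the uniform-in-$t$ control of the compression defect $\|\bfU_1 \bfU_1^\top \bfA(t) \bfU_1 \bfU_1^\top - \bfA(t)\|$ on the whole subinterval $[t_0, t_1]$: Lemma~\ref{lem:sym-aux} is stated only at the endpoint $t_1$, so I need to argue that $\bfU_1$ — which was built from the K-step data over $[t_0,t_1]$ — approximately captures the range of $\bfA(t)$ for intermediate $t$ as well. The clean way is to observe that $\|\bfA(t) - \bfA_1\| \le B(t_1 - t) \le Bh$ by the boundedness of $\bfF$, so the intermediate compression defect differs from the endpoint one ($\le 2\vartheta$) by at most $2Bh$, and $Bh = O(h(h+\varepsilon))$ trivially (it is $O(h^2)$ when... actually $Bh$ is only $O(h)$, so one must be careful here — the correct observation is that what enters is $\|(\bfI - \bfU_1\bfU_1^\top)\bfA(t)\| \le \|(\bfI - \bfU_1\bfU_1^\top)\bfA_1\| + \|\bfA(t) - \bfA_1\| \le \vartheta + Bh$, and this $O(h)$ defect, once multiplied by the extra factor $h$ coming from the Gronwall integration over $[t_0,t_1]$, produces the desired $O(h^2)$ contribution). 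Everything else is routine: orthogonal invariance of the Frobenius norm, the triangle inequality, the Lipschitz bound on $\bfF$, and Gronwall's inequality.
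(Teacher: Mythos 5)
Your proposal is correct and follows essentially the same route as the paper's proof: the same initial splitting via Lemma~\ref{lem:sym-aux} reducing everything to $\|\bfS_1-\bfU_1^\top\bfA_1\bfU_1\|$, the same comparison of the Galerkin $\bfS$-equation with $\bfU_1^\top\bfA(t)\bfU_1$, the same use of the bound $B$ to control the compression defect at intermediate times $t\in[t_0,t_1]$ (so that the resulting $Bh$-sized term becomes $O(h^2)$ after the time integration of length $h$), and a Gronwall argument. The only cosmetic difference is that you phrase the comparison as a differential inequality for the error $\bfS(t)-\bfU_1^\top\bfA(t)\bfU_1$, whereas the paper writes $\bfU_1^\top\bfA(t)\bfU_1$ as the solution of a perturbed version of the $\bfS$-equation with a defect bounded by $2L(Bh+\vartheta)$ and then applies Gronwall to the two equations.
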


\begin{proof}
	By the identity $\bfY_1=\bfU_1\bfS_1\bfU_1^\top$ and Lemma~\ref{lem:sym-aux} we have that
	\begin{equation*}
		\begin{aligned}
			\| \bfY_1 - \bfA_1 \| 
			&\leq \| \bfY_1 - \bfU_1 \bfU_1^\top \bfA_1 \bfU_1 \bfU_1^\top \| + \| \bfU_1 \bfU_1^\top \bfA_1 \bfU_1 \bfU_1^\top- \bfA_1 \| \\
			&\leq \| \bfU_1( \bfS_1 - \bfU_1^\top \bfA_1 {\bfU}_1) \bfU_1^\top \| + 2\vartheta \\
			&= \| \bfS_1 - \bfU_1^\top \bfA_1  {\bfU}_1 \|  + 2\vartheta.
		\end{aligned}
	\end{equation*}
	The analysis of the local error thus reduces to estimating $\| \bfS_1 - \bfU_1^\top \bfA_1  {\bfU}_1 \|$.
	To this end, we introduce the following quantity: for $t_0\le t \le t_1$,
	$$ \widetilde\bfS(t) := \bfU_1^\top \bfA(t)  {\bfU}_1 . $$
	We observe that
	\begin{equation*}
		\begin{aligned}
			\bfA(t) 
				&= \bfU_1 \bfU_1^\top \bfA(t) \bfU_1 \bfU_1^\top + \Bigl( \bfA(t) - \bfU_1 \bfU_1^\top \bfA(t) \bfU_1 \bfU_1^\top \Bigr) 
				= \bfU_1 \widetilde\bfS(t) \bfU_1^\top + \textbf{R}(t),
		\end{aligned}
	\end{equation*}
	where $\textbf{R}(t)$ is defined as the term in big brackets.
	From Lemma~\ref{lem:sym-aux} and from the bound $B$ of $\bfF$, which yields for $t_0 \le t \le t_1$
	$$
	\| \bfA(t) - \bfA(t_1) \| \le \int_{t_0}^{t_1} \| \dt \bfA(s) \|\, ds =
	\int_{t_0}^{t_1} \| \bfF(s,\bfA(s)) \| \, ds \le Bh,
	$$
	we conclude that the remainder term is bounded by
	$$ 
		\| \textbf{R}(t) \| 
			\leq \|\textbf{R}(t) - \textbf{R}(t_1) \| + \| \textbf{R}(t_1) \|
			\leq 2Bh + 2\vartheta. 
	$$
	This yields that $\bfF(t, \bfA(t))$ can be written as
	\begin{equation*}
		\begin{aligned}
			\bfF(t, \bfA(t)) 
				&= \bfF(t, \bfU_1 \widetilde\bfS(t) \bfU_1^\top + \textbf{R}(t) ) \\
				&= \bfF(t, \bfU_1 \widetilde\bfS(t) \bfU_1^\top) + \bfD(t),
		\end{aligned}
	\end{equation*}
	where the defect 
	$$
	 \bfD(t) := \bfF(t, \bfU_1 \widetilde\bfS(t) \bfU_1^\top + \textbf{R}(t)) - \bfF(t, \bfU_1 \widetilde\bfS(t) \bfU_1^\top)
	$$
	is bounded via the Lipschitz continuity of $\bfF$ as
	$$
	\|  \bfD(t) \| \le L \| \textbf{R}(t) \| \le 2L (Bh + \vartheta).
	$$
	We now compare the two differential equations
	\begin{equation*}
		\begin{aligned}
			&\dot{\widetilde\bfS}(t) = \bfU_1^\top \bfF(t, \bfU_1 \widetilde\bfS(t) \bfU_1^\top)  {\bfU}_1 + \bfU_1^\top \bfD(t)  {\bfU}_1, 
			\qquad
			&\widetilde\bfS(t_0) = \bfU_1^\top \bfY_0  {\bfU}_1,\\
			&\dot{\bfS}(t) = \bfU_1^\top \bfF(t, \bfU_1 \bfS(t) \bfU_1^\top)  {\bfU}_1, 
			\qquad
			&\bfS(t_0) = \bfU_1^\top \bfY_0  {\bfU}_1.
		\end{aligned}
	\end{equation*}
	By construction, the solution of the first differential equation at time $t_1$ is  $ \widetilde \bfS(t_1) = \bfU_1^\top \bfA_1  {\bfU}_1$. The solution of the second differential equation is $\bfS_1$  as given by the second substep of the (skew-)symmetric integrator.
	We  now apply the Gronwall inequality to the previous system and obtain
	$$
		\|  \bfS_1 - \bfU_1^\top \bfA_1  {\bfU}_1 \| 
			\leq \int_{t_0}^{t_1} e^{L(t_1-s)} \, \| \bfD(s) \| \, ds
			\leq e^{Lh} \,2L (Bh + \vartheta) h.
	$$
	The result now follows using the definition of $\vartheta$. \qed
\end{proof}

Thanks to the Lipschitz continuity of the function $\bfF$, we conclude the proof of Theorem \ref{thm:sym-robust} from the local to the global errors by the standard argument of Lady Windermere's fan \cite[Section II.3]{HairerNorsettWanner:ODE_BOOK1}.

\section{General tensors: recap of the projector-splitting integrator for the dynamical low-rank approximation by Tucker tensors}
The objective is to approximate time-depend\-ent tensors\footnote{In view of the applications in quantum dynamics, we here consider tensors with complex entries.} $A(t)\in \C^{n_1\times\dots\times n_d}$ for $0\le t \le T$ by tensors  $Y(t)$ of multilinear rank
$\bfr=(r_1,\dots,r_d)$, with $r_i \ll n_i$. (We recall that $r_i$ is the rank of the $i$th matricization $\mathbf{Mat}_i(Y)\in \C^{n_i\times n_i'}$ with $n_i'=\prod_{j\ne i} n_j$, which aligns  all entries of $Y$ with $i$th index $k$ in the $k$th row. The retensorization is denoted by {\it Ten}$_i(\cdot)$, such that {\it Ten}$_i(\mathbf{Mat}_i(Y)) = Y$.)

The tensors $A(t)$ may be given explicitly or they are the unknown solution to a tensor differential equation
(with right-hand side function $F:\R\times \C^{n_1\times\dots\times n_d} \to \C^{n_1\times\dots\times n_d}$)
\begin{equation} \label{eq:fullEq-ten}
\dt{A}(t) = F(t, A(t)), 
\qquad
A(0) = A_0 .
\end{equation}
Dynamical low-rank approximation as presented in  \cite{KochLubich10} determines $Y(t)$ as the solution of the projected matrix
differential equation,
with a projection $\P(Y)$ onto the tangent space $T_Y \calM_\bfr$ of the manifold of tensors of multilinear rank $\bfr$ at $Y\in\calM_\bfr$,
\begin{equation} \label{eq:projEq-ten}
\dt{Y}(t) = \P(Y(t)) F(t, Y(t)),
\qquad
Y(t_0) = Y_0,
\end{equation}
where $Y_0$ is a rank-$\bfr$ approximation to $A_0$. (Here, $F(t,Y) = \dt A(t)$ if $A(t)$ is given explicitly.)
Tensors $Y(t)$ of multilinear rank $\bfr$ are represented non-uniquely in the Tucker form \cite{DeLauthawer:HOSVD} (using here the multilinear notation of \cite{KoldaBader:TensorDec})
\begin{equation} \label{Tucker}
	Y(t) = C(t) \bigtimes_{i=1}^d \bfU_i(t) ,
\end{equation}
where the core tensor $C(t) \in \mathbb{C}^{r_1 \times \dots \times r_d }$ is of full multi-linear rank and the basis matrices $\bfU_i \in \mathbb{C}^{n \times r_i}$ have orthonormal columns.
We choose the tangent space projection $\P(Y)$ as the orthogonal projection onto $T_Y(\calM_r)$ with respect to the Euclidean inner product $\langle A,B \rangle = \textbf{vec}(A)^* \textbf{vec}(B)$, where $\textbf{vec}(A)$ is a vectorization of $A$.
Then, $\P(\bfY)$ is given as an alternating sum of $2d-1$ subprojections \cite{Lubich:MCTDH}, and like in the matrix case, a projector-splitting integrator with favourable properties can be formulated and efficiently implemented. The matrix projector-splitting integrator proposed in Section 2.1 has been successfully extended to the Tucker tensor format  in different algorithmic versions in \cite{Lubich:MCTDH} and \cite{LubichVandWalach}. It is shown in \cite[Section 6]{LubichVandWalach} that the proposed Tucker integrators are mathematically equivalent.  The algorithm runs through the modes $i=1,\dots,d$ and solves differential equations for matrices of the dimension of the slim basis matrices and for the core tensor in alternation with orthogonal decompositions of slim matrices. We refer also to
\cite{KlossBL17,BonfantiB18} for the formulation and implementation of this algorithm in the context of the MCTDH method \cite{MeyerGW09} of molecular quantum dynamics in the chemical physics literature.

Moreover, the Tucker integrator has been proved in \cite{LubichVandWalach} to satisfy analogous properties to the matrix projector-splitting integrator: the exactness property and  the robust convergence in the presence of small singular values of matricizations of the core tensor. We refer to \cite[Theorems 4.1 and 5.1]{LubichVandWalach} for the precise formulation, which is very similar to the matrix case.

\section{Symmetric and anti-symmetric tensors: a structure-preserving integrator for dynamical low-rank approximation}
\label{sec:sym-ten}
A tensor $A=(a_{i_1,\dots,i_d})\in \C^{n\times\dots\times n}$ is {\it symmetric} if for every permutation $\sigma\in S(d)$,
$$
a_{i_{\sigma(1)},\dots, i_{\sigma(d)}} = a_{i_1,\dots,i_d},
$$
and $A$ is {\it anti-symmetric} if for every permutation $\sigma\in S(d)$,
$$
a_{i_{\sigma(1)},\dots, i_{\sigma(d)}} = (-1)^{\mathrm{sign}(\sigma)}a_{i_1,\dots,i_d}.
$$

It follows from \cite{DeLauthawer:HOSVD} and \cite{Hackbusch:SymTenRepr} that a symmetric/anti-symmetric tensor $Y\in \C^{n\times\dots\times n}$ of multi-linear rank $ \textbf{r} = (r, \dots, r)$  admits a Tucker decomposition
$$ 	
Y = C \bigtimes_{i=1}^d \bfU,  
$$
where the core tensor $C\in \C^{r\times\dots\times r}$ is symmetric/anti-symmetric of full rank $\bfr$ and the basis matrix 
$\bfU \in \C^{n\times r}$
is the same for all indices.

We assume that the right-hand side function in \eqref{eq:fullEq-ten} is such that
\begin{equation} \label{F-sym-ten}
\text{$F(t,Y)$ is (anti-)symmetric whenever $Y$ is (anti-)symmetric.}
\end{equation}
Like \eqref{F-sym} in the matrix case, this ensures that the solutions to the tensor differential equation \eqref{eq:fullEq-ten} and the projected differential equation \eqref{eq:projEq-ten} are (anti-)symmetric provided the initial tensors are (anti-)symmetric. As we noted already in the matrix case, the projector-splitting Tucker integrator does not preserve (anti-)symmetry.

\subsection{(Anti-)symmetry preserving Tucker integrator}
The numerical integrator defined in Section \ref{sec:Sym} for the matrix case extends in a natural way to the Tucker tensor format. The first substep, which updates the basis matrix $\bfU$, is identical to the first substep of the general Tucker integrator in \cite{LubichVandWalach,Lubich:MCTDH}. The second substep is a Galerkin method with the updated basis and determines the updated (anti-)symmetric core tensor.

Given the (anti-)symmetric tensor $Y_0 = C_0 \bigtimes_{i=1}^{d} \bfU_0 $, we compute the (anti-)sym\-metric approximation \- $Y_1 = C_1 \bigtimes_{i=1}^{d} \bfU_1 $ at time $t_1 = t_0 + h$ as follows: \\

\hfill \break
\begin{algorithm}[H]
	\caption{One time step of the (anti-)symmetry preserving Tucker integrator}
	\KwData{Tucker tensor $Y_0 = C_0 \bigtimes_{i=1}^d \bfU_0$, $F(t,Y)$, $t_0$, $t_1$}
	\KwResult{Tucker tensor $Y_1 = C_1 \bigtimes_{i=1}^d \bfU_1$}
	\Begin{
		Matricize the core tensor $C_0$ in the first mode.

		Perform a QR-factorization:
			$$ \text{\textbf{Mat}}_1(C_0)^\top = \textbf{Q}_0 \bfS_0^\top ,$$
			where $ \textbf{Q}_0 \in \C^{r^{d-1}\times r}$ has orthonormal columns.
			Define
			$$ \bfV_0^\top = 
					\textbf{Q}_0^\top \bigotimes_{i = 2}^d \bfU_0^\top \ .
			$$

		Integrate from $t=t_0$ to $t_1$ the $n\times r$ matrix differential equation
		\begin{equation*}
			\dot{\textbf{K}}(t) = 
				\text{\textbf{Mat}}_1( 
						F(t, \text{\it Ten}_1( \textbf{K}(t)\bfV_0^\top))
					) \overline{\bfV}_0,
			\qquad
			\textbf{K}(t_0) = \bfU_0 \bfS_0.
		\end{equation*}

		Compute the QR-factorization 
		$ \textbf{K}(t_1) = \bfU_1 \textbf{R}$ \ .
		
		\
		
	   Integrate from $t=t_0$ to $t_1$ the $r\times\dots\times r$ ($d$ times) tensor equation
		\begin{align*}
			&\dot{C}(t) = F \left( t, C(t) \bigtimes_{i=1}^d \bfU_1 \right) \bigtimes_{i=1}^d \bfU_1^*,
			\\
			&C(t_0) = Y_0 \bigtimes_{i=1}^d \bfU_1^* = C_0 \bigtimes_{i=1}^d (\bfU_1^*\bfU_0). 
		\end{align*}
		
		Set $C_1=C(t_1)$.
	}
\end{algorithm}
\vspace{10pt}
%
To continue, we take $Y_1$ as the starting value for the next step.
%
%
%

\subsection{Exactness property of the (anti-)symmetric Tucker integrator} The following result extends
the exactness results of Theorem~\ref{thm:sym-exact} and \cite[Theorem 4.1]{LubichVandWalach} to (anti-)symmetric tensors. 
\begin{theorem}[Exactness property]
\label{thm:sym-exact-ten}
	Let $A(t) \in \C^{n \times\dots\times n}$ be (anti-)symmetric and of multilinear rank~$(r,\dots,r)$  for $t_0 \leq t \leq t_1$,
	so that $A(t)=C(t)\bigtimes_{i=1}^d \bfU(t)$, where the $n\times r$ basis matrix $\bfU$ has orthonormal columns. Moreover, assume  that the $r\times r$ matrix $ \bfU(t_1)^* \bfU(t_0)$ is invertible.
	With $Y_0=A(t_0)$, the (anti-)symmetric Tucker integrator for 
	$\dt Y(t)=\P(Y(t))\dt A(t)$ is then exact: $Y_1 = A(t_1)$.
\end{theorem}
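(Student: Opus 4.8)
The plan is to mirror the proof of the matrix exactness result Theorem~\ref{thm:sym-exact}, with the projector-splitting matrix integrator replaced by the projector-splitting Tucker integrator of \cite{Lubich:MCTDH,LubichVandWalach}. First I would observe that the first substep of the (anti-)symmetric Tucker integrator agrees \emph{verbatim} with the first (mode-$1$) substep of the projector-splitting Tucker integrator: the $\textbf{K}$-equation, its initial value $\textbf{K}(t_0)=\bfU_0\bfS_0$, and the construction of $\bfV_0$ from the $QR$-factorization of $\mathbf{Mat}_1(C_0)^\top$ all coincide. Under the hypotheses of the theorem ($A(t)$ of multilinear rank $(r,\dots,r)$ on $[t_0,t_1]$; $\bfU(t_1)^*\bfU(t_0)$ invertible --- where, by (anti-)symmetry, the single basis matrix $\bfU(t)$ serves every mode, so one invertibility assumption is all that is required), the exactness property \cite[Theorem~4.1]{LubichVandWalach} of the projector-splitting Tucker integrator applies, and from its proof the basis matrix $\bfU_1\in\C^{n\times r}$ returned by the first substep spans $\mathrm{range}(\mathbf{Mat}_1(A(t_1)))$. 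Since $A(t_1)=C(t_1)\bigtimes_{i=1}^d\bfU(t_1)$ with $C(t_1)$ of full multilinear rank, this range equals $\mathrm{range}(\bfU(t_1))$, so $\bfU_1\bfU_1^*$ is the orthogonal projector onto $\mathrm{range}(\bfU(t_1))$, which yields the Tucker analogue of \eqref{UUA},
$$
A(t_1)\bigtimes_{i=1}^d(\bfU_1\bfU_1^*) = C(t_1)\bigtimes_{i=1}^d\bigl(\bfU_1\bfU_1^*\,\bfU(t_1)\bigr) = C(t_1)\bigtimes_{i=1}^d\bfU(t_1) = A(t_1).
$$
Here (anti-)symmetry enters crucially: it forces all mode-$i$ ranges of $A(t_1)$ to coincide, so that a single $\bfU_1$ used in all $d$ slots already reproduces $A(t_1)$.

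Next I would use that, in the exactness setting, the right-hand side is $F(t,Y)=\dt A(t)$, independent of $Y$. Then the core-tensor equation of the second substep reduces to $\dot C(t)=\dt A(t)\bigtimes_{i=1}^d\bfU_1^*$ with $C(t_0)=Y_0\bigtimes_{i=1}^d\bfU_1^*=A(t_0)\bigtimes_{i=1}^d\bfU_1^*$, using $Y_0=A(t_0)$. Because multilinear multiplication by the fixed matrices $\bfU_1^*$ is linear, this integrates explicitly to
$$
C_1 = A(t_0)\bigtimes_{i=1}^d\bfU_1^* + \Bigl(\int_{t_0}^{t_1}\dt A(t)\,dt\Bigr)\bigtimes_{i=1}^d\bfU_1^* = A(t_1)\bigtimes_{i=1}^d\bfU_1^*,
$$
the exact counterpart of the identity $\bfS_1=\bfU_1^\top\bfA(t_1)\bfU_1$ in the matrix proof; in particular $C_1$, being $A(t_1)$ transformed by the same $\bfU_1^*$ in every mode, is again (anti-)symmetric of full rank. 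Assembling the two substeps with the composition rule for multilinear products,
$$
Y_1 = C_1\bigtimes_{i=1}^d\bfU_1 = \Bigl(A(t_1)\bigtimes_{i=1}^d\bfU_1^*\Bigr)\bigtimes_{i=1}^d\bfU_1 = A(t_1)\bigtimes_{i=1}^d(\bfU_1\bfU_1^*) = A(t_1),
$$
the last step being the displayed identity above; hence $Y_1=A(t_1)$, as claimed.

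The step I expect to be the main obstacle is the first one: making precise, in the Tucker setting, that the $\bfU_1$ produced by the first substep really spans $\mathrm{range}(\mathbf{Mat}_1(A(t_1)))$. In the matrix case this is \eqref{UUA}, read off at once from Theorem~\ref{thm:proj-split-exact}; in the Tucker case it requires tracing the exactness argument of \cite{LubichVandWalach} to confirm that the mode-$1$ basis matrix computed first is the one retained in the final exact representation $Y_1=A(t_1)$ of the projector-splitting Tucker integrator. Everything after that is a routine linear computation, almost identical to the matrix case.
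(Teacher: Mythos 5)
Your proposal is correct and follows essentially the same route as the paper's proof: both exploit that the first substep coincides with that of the projector-splitting Tucker integrator, invoke its exactness to conclude $A(t_1)\bigtimes_{i=1}^d(\bfU_1\bfU_1^*)=A(t_1)$ (you justify this slightly more explicitly via the mode-$1$ range and the full-rank core, where the paper simply cites the representation $A(t_1)=\widehat C_1\bigtimes_{i=1}^d\bfU_1$), and then integrate the core equation exactly to get $C_1=A(t_1)\bigtimes_{i=1}^d\bfU_1^*$ and hence $Y_1=A(t_1)$.
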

%

\begin{proof}
	The projector-splitting Tucker integrator and the (anti-)symmetric integrator have the same first substep. Let $\bfU_1 \in \mathbb{R}^{n \times r}$ be the basis matrix with orthonormal columns computed in the first substep. Due to the exactness of the projector-splitting Tucker integrator as shown by \cite[Theorem 4.1]{LubichVandWalach} we have that $A(t_1)$ has  the (anti)-symmetric Tucker representation
$$
A(t_1)= \widehat C_1  \bigtimes_{i=1}^{d} \bfU_1
$$
for some (anti-)symmetric core tensor $ \widehat C_1 \in \C^{r\times\dots\times r}$. 
		Using the rule $A \times_i \bfV \times_i \textbf{W} = A \times_i (\textbf{WV})$, this implies that
		$$ A(t_1) \times_i (\bfU_1 \bfU_1^*) = A(t_1),\qquad  i = 1, \dots , d \ .$$
	 With $Y_0 = A(t_0)$  we obtain from the second substep of the algorithm
		\begin{align*} Y_1 &= C_1 \bigtimes_{i=1}^d \bfU_1  =
		\Bigl( Y_0 \bigtimes_{i=1}^d  \bfU_1 ^* + (A(t_1)-A(t_0)) \bigtimes_{i=1}^d  \bfU_1^* \Bigr) \bigtimes_{i=1}^d \bfU_1
		\\
		&= \Bigl( A(t_1)\bigtimes_{i=1}^d  \bfU_1^* \Bigr) \bigtimes_{i=1}^d \bfU_1 
		= A(t_1) \bigtimes_{i=1}^d (\bfU_1 \bfU_1^*) = A(t_1),
		\end{align*}		
		which proves the exactness. \qed
 \end{proof}

\subsection{Robustness to small singular values} The robust error bounds from Theorem~\ref{thm:sym-robust} and \cite[Theorem~5.1]{LubichVandWalach} extend to the (anti)\-symmetric Tucker integrator as follows. The norm $\|B\|$ of a tensor $B$ used here is the Euclidean norm of the entries of $B$.

\begin{theorem}[{Robust error bound}]
\label{thm:sym-robust-ten}
	Let $A(t)$ denote the (anti-)symmetric solution of the tensor differential equation \eqref{eq:fullEq-ten} with $F$ satisfying \eqref{F-sym-ten}. Assume  the following:
	\begin{enumerate}
		\item 
		$F$ is Lipschitz-continuous and bounded.		
		\item
		The non-tangential part of $F(t, Y)$ is $\varepsilon$-small:
		$$
		\| (I - \P(Y)) F(t, Y) \| \le \eps
		$$
		for all $Y$ of multilinear rank  $(r,\dots,r)$ in a neighbourhood of $A(t)$ and $0\le t \le T$.
		\item
		The error in the initial value is $\delta$-small:
		$$
		\| Y_0 - A_0 \| \le \delta.
		$$
	\end{enumerate}	
	Let $Y_n$ denote the (anti-)symmetric approximation of multinear rank $(r,\dots,r)$ to $A(t_n)$ at $t_n=nh$ obtained after n steps of the (anti-)symmetric Tucker integrator with step-size $h>0$.
	Then, the error satisfies for all $n$ with $t_n =  nh \leq T$
	$$ \| Y_n - A(t_n) \| \leq c_0\delta + c_1 \varepsilon + c_2 h ,$$	
	where the constants $c_i$ only depend on the Lipschitz constant $L$ and bound $B$ of $F$, on~$T$, and on the dimension $d$. In particular, the constants are independent of singular values of matricizations of the exact or approximate solution. 
\end{theorem}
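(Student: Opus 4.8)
The plan is to mirror exactly the structure already developed for the matrix case in Section~\ref{sec:Sym}, replacing the matrix projections $\bfU_1\bfU_1^\top$ by the successive tensor projections $\times_i(\bfU_1\bfU_1^*)$ and invoking the known robustness result \cite[Theorem~5.1]{LubichVandWalach} for the projector-splitting Tucker integrator in place of Theorem~\ref{thm:proj-split-robust}. Since Theorem~\ref{thm:sym-exact-ten} shows the first substep of the (anti-)symmetric Tucker integrator coincides with that of the projector-splitting Tucker integrator, the matrix $\bfU_1$ produced is the same, and the local error analysis of \cite{LubichVandWalach} provides a bound $\|\,\widehat\bfU_1 \textbf{Z} - A_1\| \le \vartheta_d(h,\varepsilon)$ (with $\vartheta_d$ depending on $L,B,d$) on the distance from the exact solution $A_1$ of \eqref{eq:fullEq-ten} with initial value $Y_0$ to the range of the first mode's updated basis; in fact the full cascade of the projector-splitting integrator yields that the orthogonal projection onto the span of $\bfU_1$ in each mode captures $A_1$ up to $\vartheta_d$.

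First I would prove a tensor analogue of Lemma~\ref{lem:sym-aux}: letting $\bfP_1 = \bfU_1\bfU_1^*$ act in each mode, I claim $\|A_1 \bigtimes_{i=1}^d \bfP_1 - A_1\| \le d\,\vartheta_d(h,\varepsilon)$ (possibly with a slightly different constant). The key fact I need is that $\|(I-\bfP_1\ \text{in mode }i)A_1\| \le \vartheta_d$ for each $i$; for mode~1 this is immediate from the local error bound, and for the remaining modes it follows from (anti-)symmetry of $A_1$, since permuting indices is an isometry that converts the mode-$i$ statement into the mode-$1$ statement. Then a telescoping argument
$$
A_1 \bigtimes_{i=1}^d \bfP_1 - A_1 = \sum_{j=1}^d \Bigl( A_1 \times_1\bfP_1\cdots\times_{j}\bfP_1 - A_1 \times_1\bfP_1\cdots\times_{j-1}\bfP_1 \Bigr),
$$
together with the fact that each $\bfP_1$ is a contraction in every mode, bounds the total by $d\,\vartheta_d$. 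Next, following Lemma~\ref{lem:loc-err}, I would write $A(t) = C^\sharp(t)\bigtimes_{i=1}^d\bfU_1 + R(t)$ with $C^\sharp(t) = A(t)\bigtimes_{i=1}^d\bfU_1^*$ and the remainder $R(t)$ bounded by $2dBh + d\,\vartheta_d$ using $\|A(t)-A(t_1)\|\le Bh$ and the previous step; the Lipschitz continuity and boundedness of $F$ then give a defect bound $\|D(t)\|\le L\|R(t)\|$, and a Gronwall comparison between the exact core evolution $\dot C^\sharp = (F(t,C^\sharp\bigtimes\bfU_1)+D)\bigtimes\bfU_1^*$ and the algorithm's core equation $\dot C = F(t,C\bigtimes\bfU_1)\bigtimes\bfU_1^*$ (both with the same initial value $Y_0\bigtimes_{i=1}^d\bfU_1^*$) yields $\|C_1 - C^\sharp(t_1)\| \le e^{Lh}\,L(2dBh + d\vartheta_d)h$. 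Since $Y_1 = C_1\bigtimes_{i=1}^d\bfU_1$ and $A_1 \approx C^\sharp(t_1)\bigtimes_{i=1}^d\bfU_1$ up to $\|R(t_1)\|\le d\vartheta_d$, the local error is $\|Y_1-A_1\|\le h(\hat c_1\varepsilon + \hat c_2 h)$ with constants depending only on $L,B,d$ and a step-size bound. Finally I would pass from the local to the global error by the standard Lady Windermere's fan argument \cite[Section~II.3]{HairerNorsettWanner:ODE_BOOK1}, using the Lipschitz stability of the exact flow of \eqref{eq:fullEq-ten}; this is where the factor $c_0\delta$ in the initial data and the accumulation over $n\le T/h$ steps produce the final bound.

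The main obstacle I anticipate is not the Gronwall or fan parts, which are routine, but rather being precise about what the projector-splitting Tucker local error analysis of \cite{LubichVandWalach} actually delivers for the updated basis $\bfU_1$ in the first mode only — the (anti-)symmetric integrator updates a single basis $\bfU$, whereas the projector-splitting Tucker integrator cycles through all $d$ modes and updates $d$ different bases. I need the statement that, after the \emph{first} mode's K-step and QR, the span of $\bfU_1$ already approximates the first matricization of $A_1$ to within $\vartheta_d(h,\varepsilon)$; this should follow from the structure of the projector-splitting local error proof since the first K-step is an exact integration of a full-rank $n\times r$ system, but it requires care to extract cleanly and to keep the $d$-dependence of the constant explicit. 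Once that is pinned down, the (anti-)symmetry of $A_1$ does all the remaining work of transferring the mode-1 estimate to the other modes, and the rest of the argument is a faithful copy of the matrix proof.
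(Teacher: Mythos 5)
Your proposal is correct and follows essentially the same route as the paper: a tensor analogue of Lemma~\ref{lem:sym-aux} obtained by extracting the mode-1 estimate $\|\bfU_1\bfU_1^*\,\mathbf{Mat}_1(A_1)-\mathbf{Mat}_1(A_1)\|\le\vartheta$ from the projector-splitting Tucker error bound of \cite[Theorem 5.1]{LubichVandWalach}, transferring it to the other modes by (anti-)symmetry, telescoping with the contraction property of the orthogonal projections, then a defect-plus-Gronwall comparison of the core equation with $\widetilde C(t)=A(t)\bigtimes_{i=1}^d\bfU_1^*$ exactly as in Lemma~\ref{lem:loc-err}, and finally Lady Windermere's fan. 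The subtlety you flag (what the projector-splitting local analysis delivers for the first-mode basis $\bfU_1$) is precisely the point the paper handles by citing that result, so no genuinely different ideas are involved.
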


It can be further shown that an inexact solution of the matrix differential equations in the  integrator
leads to an additional error that is bounded in terms of the local errors in the inexact substeps, again with constants that do not depend on small singular values.
 
The proof of Theorem \ref{thm:sym-robust-ten} proceeds similar to the proof of Theorem~\ref{thm:sym-robust} for the (skew)-symmetric matrix case. We begin with a key lemma and then analyze  the local error produced after one time step, comparing the numerical solution with the exact solution that starts from the same initial value $A_0=Y_0$. We denote the value of this solution at $t_1$ by $A_1$. The basis matrix computed in the first substep of the integrator is denoted by $\bfU_1$.

\begin{lemma} \label{lem:aux-ten}
	The following estimate holds:
	$$ || A_1 \bigtimes_{i=1}^d \bfU_1 \bfU_1^* - A_1 || \leq  c \,h(BLh+\varepsilon), $$
	where $c$ only depends on $d$ and a bound for $hL$.
\end{lemma}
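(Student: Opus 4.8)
The plan is to mirror the proof of Lemma~\ref{lem:sym-aux} from the matrix case, replacing the single-sided projection argument by an iteration over the $d$ modes. The starting point is the local error analysis of the projector-splitting Tucker integrator in \cite{LubichVandWalach}, which provides a quantity $\vartheta=\vartheta(h,\varepsilon)=O(h(BLh+\varepsilon))$ such that, after the first substep, the computed basis matrix $\bfU_1$ satisfies
$$
\| A_1 \times_1 \bfU_1\bfU_1^* - A_1 \| \le \vartheta .
$$
(Concretely, this follows exactly as in the matrix case: the projector-splitting output $\bfU_1\textbf{Z}$ is within $\vartheta$ of $A_1$, and since $\bfU_1\bfU_1^*$ is an orthogonal projection one splits $\|\bfU_1\textbf{Z}-A_1\|^2$ into the component in the range of $\bfU_1\bfU_1^*$ plus $\|(I-\times_1\bfU_1\bfU_1^*)A_1\|^2$, so the latter is bounded by $\vartheta$.)

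The key point is that because $A_1$ is (anti-)symmetric, the \emph{same} basis matrix $\bfU_1$ works in every mode: applying a permutation of indices shows
$$
\| A_1 \times_j \bfU_1\bfU_1^* - A_1 \| \le \vartheta ,\qquad j=1,\dots,d.
$$
Now I telescope. Writing $\Pi_j = \times_j(\bfU_1\bfU_1^*)$ for the mode-$j$ orthogonal projection, each $\Pi_j$ is a contraction in the Euclidean tensor norm, so
$$
\| \Pi_d\cdots\Pi_1 A_1 - A_1 \| \le \sum_{j=1}^d \| \Pi_j\cdots\Pi_1 A_1 - \Pi_{j-1}\cdots\Pi_1 A_1 \|
= \sum_{j=1}^d \| \Pi_{j-1}\cdots\Pi_1 (\Pi_j A_1 - A_1) \|
\le \sum_{j=1}^d \| \Pi_j A_1 - A_1 \| \le d\,\vartheta,
$$
where I used that the projections in different modes commute and are contractions. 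Since $A_1 \bigtimes_{i=1}^d \bfU_1\bfU_1^* = \Pi_d\cdots\Pi_1 A_1$, this gives
$$
\Bigl\| A_1 \bigtimes_{i=1}^d \bfU_1\bfU_1^* - A_1 \Bigr\| \le d\,\vartheta = c\,h(BLh+\varepsilon)
$$
with $c$ depending only on $d$ and a bound for $hL$ (through the $e^{Lh}$-type factor hidden in $\vartheta$), which is the claim.

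The only genuinely nontrivial ingredient is the bound $\| A_1 \times_1 \bfU_1\bfU_1^* - A_1 \| \le \vartheta$, i.e. that the range of the first-substep basis matrix captures the mode-$1$ fibres of the exact solution $A_1$ up to the local error; this is precisely what the projector-splitting local error analysis of \cite[Theorem~5.1 and its proof]{LubichVandWalach} delivers, exactly as \cite{KieriLubichWalach} did in the matrix case used in Lemma~\ref{lem:sym-aux}. The rest is the telescoping/permutation argument above, which is elementary once one observes that (anti-)symmetry lets a single $\bfU_1$ serve all modes and that mode-wise orthogonal projections commute and are norm-nonincreasing. I would therefore expect the write-up to consist of: (i) invoke the projector-splitting local bound to get the mode-$1$ estimate with constant $\vartheta$; (ii) use (anti-)symmetry to extend it to all modes; (iii) telescope over the $d$ modes using commutativity and contractivity of the $\Pi_j$; (iv) collect constants.
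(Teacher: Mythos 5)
Your proposal is correct and follows essentially the same route as the paper's proof: obtain the mode-1 bound $\|A_1\times_1 \bfU_1\bfU_1^* - A_1\|\le\vartheta$ from the projector-splitting local error analysis of \cite{LubichVandWalach} (via the orthogonal splitting as in the matrix case), use (anti-)symmetry to transfer it to every mode, and telescope over the $d$ modes using contractivity of the orthogonal mode projections to get $d\vartheta$. The only cosmetic difference is that you invoke commutativity of the mode projections in the telescoping, whereas the paper simply peels off one mode at a time, which needs only contractivity.
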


\begin{proof}
		The error bound of \cite[Theorem 5.1]{LubichVandWalach} shows that there exists $Z \in \mathbb{C}^{r \times n\times \dots \times n}$ such that
		\begin{equation*}
		 \| Z \times_1 \bfU_1 - A_1 \| \leq c_* h(BLh+\varepsilon) =: \vartheta.
		\end{equation*}
		We observe that
		\begin{equation*}
			\begin{aligned}
			\| Z \times_1 \bfU_1 - A_1 \| 
				= \| \textbf{Mat}_1(Z \times_1 \bfU_1 - A_1)\| 
				= \| \bfU_1 \textbf{Mat}_1(Z) - \textbf{Mat}_1(A_1) \| .
			\end{aligned}
		\end{equation*}
		As in the matrix case we obtain 
		$$ \| \bfU_1 \bfU_1^* \textbf{Mat}_1(A_1) - \textbf{Mat}_1(A_1) \| \leq \vartheta . $$
		Thanks to (anti-)symmetry we have 
		$$ \| \bfU_1 \bfU_1^* \textbf{Mat}_1(A_1) - \textbf{Mat}_1(A_1) \|
			 = \| \bfU_1 \bfU_1^* \textbf{Mat}_i(A_1) - \textbf{Mat}_i(A_1)\|,
			 \quad \  i=1, \dots ,d , $$
which yields
		$$ \|  A_1 \times_i \bfU_1 \bfU_1^* - A_1 \| \leq \vartheta, \qquad  \  i=1, \dots, d .$$
		To conclude, we observe 
		\begin{equation*}
			\begin{aligned}
				&\|  A_1 \bigtimes_{i=1}^d \bfU_1 \bfU_1^* - A_1 \| 
				\\
				&\leq \|  A_1 \bigtimes_{i=1}^d \bfU_1 \bfU_1^* -A_1 \bigtimes_{i=1}^{d-1} \bfU_1 \bfU_1^* 
				+A_1 \bigtimes_{i=1}^{d-1} \bfU_1 \bfU_1^* - A_1 \| \\
				&\leq \|  A_1 \bigtimes_{i=1}^d \bfU_1 \bfU_1^* -A_1 \bigtimes_{i=1}^{d-1} \bfU_1 \bfU_1^* \|
					+ \| A_1 \bigtimes_{i=1}^{d-1} \bfU_1 \bfU_1^* - A_1 \| \\
				&\leq \| (A_1 \times_d \bfU_1 \bfU_1^* - A_1) \bigtimes_{i=1}^{d-1} \bfU_1 \bfU_1^* \| 
					+ \| A_1 \bigtimes_{i=1}^{d-1} \bfU_1 \bfU_1^* - A_1 \| \\
				&\leq \| A_1 \times_d \bfU_1 \bfU_1^* - A_1 \| 
					+ \| A_1 \bigtimes_{i=1}^{d-1} \bfU_1 \bfU_1^* - A_1 \| \\
				&\leq \vartheta + \| A_1 \bigtimes_{i=1}^{d-1} \bfU_1 \bfU_1^* - A_1 \| ,
			\end{aligned}
		\end{equation*}
		and the result follows by an iteration of this argument. \qed
\end{proof}

We are now in the  position to analyse the local error produced after one time step of the integrator. 

\begin{lemma}[Local error] The error of the (anti-)symmetric Tucker integrator after one time step satisfies 
$$ \| Y_1 - A_1 \| \leq \hat c \,h(BLh+\eps) , $$
	where $\hat c$ only depends on $d$ and a bound of $hL$. In particular, the constant is independent of singular values of the exact or approximate solution. 
\end{lemma}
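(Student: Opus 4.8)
The plan is to transcribe the proof of Lemma~\ref{lem:loc-err} to the Tucker setting, using Lemma~\ref{lem:aux-ten} in place of Lemma~\ref{lem:sym-aux}. First I would split the error with the triangle inequality,
$$ \| Y_1 - A_1 \| \le \Bigl\| Y_1 - A_1 \bigtimes_{i=1}^d \bfU_1\bfU_1^* \Bigr\| + \Bigl\| A_1 \bigtimes_{i=1}^d \bfU_1\bfU_1^* - A_1 \Bigr\| , $$
so that the last term is directly controlled by Lemma~\ref{lem:aux-ten}, namely $\| A_1 \bigtimes_{i=1}^d \bfU_1\bfU_1^* - A_1 \| \le \vartheta := c\,h(BLh+\varepsilon)$. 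For the first term, $Y_1 = C_1 \bigtimes_{i=1}^d \bfU_1$, and using the rule $A\times_i\bfV\times_i\bfW = A\times_i(\bfW\bfV)$ with $\bfV=\bfU_1^*$, $\bfW=\bfU_1$ we have $A_1 \bigtimes_{i=1}^d \bfU_1\bfU_1^* = \bigl(A_1 \bigtimes_{i=1}^d \bfU_1^*\bigr)\bigtimes_{i=1}^d \bfU_1$. Since the columns of $\bfU_1$ are orthonormal, the map $C \mapsto C\bigtimes_{i=1}^d\bfU_1$ preserves the Euclidean norm, whence the first term equals $\| C_1 - A_1 \bigtimes_{i=1}^d \bfU_1^*\|$. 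Thus the local error analysis reduces to estimating $\| C_1 - A_1 \bigtimes_{i=1}^d \bfU_1^*\|$, in exact analogy with the reduction to $\|\bfS_1 - \bfU_1^\top\bfA_1\bfU_1\|$ in the matrix case.

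Next I would set $\widetilde C(t) := A(t)\bigtimes_{i=1}^d \bfU_1^*$ for $t_0 \le t \le t_1$ and write $A(t) = \widetilde C(t)\bigtimes_{i=1}^d \bfU_1 + R(t)$ with remainder $R(t) := A(t) - A(t)\bigtimes_{i=1}^d\bfU_1\bfU_1^*$. The essential estimate is the uniform bound $\|R(t)\| \le 2Bh + \vartheta$ on $[t_0,t_1]$: splitting $\|R(t)\| \le \|R(t)-R(t_1)\| + \|R(t_1)\|$, the endpoint term is $\|R(t_1)\| \le \vartheta$ by Lemma~\ref{lem:aux-ten}, while, since each mode multiplication by $\bfU_1\bfU_1^*$ has norm at most one, $\|R(t)-R(t_1)\| \le 2\|A(t)-A(t_1)\| \le 2\int_{t_0}^{t_1}\|F(s,A(s))\|\,ds \le 2Bh$ by the boundedness of $F$. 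Writing then $F(t,A(t)) = F\bigl(t, \widetilde C(t)\bigtimes_{i=1}^d\bfU_1\bigr) + D(t)$ with defect $D(t) := F\bigl(t,\widetilde C(t)\bigtimes_{i=1}^d\bfU_1+R(t)\bigr) - F\bigl(t,\widetilde C(t)\bigtimes_{i=1}^d\bfU_1\bigr)$, Lipschitz continuity of $F$ gives $\|D(t)\| \le L\|R(t)\| \le L(2Bh+\vartheta)$.

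Finally I would compare the core equations. Applying $\bigtimes_{i=1}^d\bfU_1^*$ to $\dt{A}(t) = F(t,A(t))$ shows that $\widetilde C(t)$ solves
$$ \dot{\widetilde C}(t) = F\Bigl(t,\widetilde C(t)\bigtimes_{i=1}^d\bfU_1\Bigr)\bigtimes_{i=1}^d\bfU_1^* + D(t)\bigtimes_{i=1}^d\bfU_1^*, \qquad \widetilde C(t_0) = Y_0\bigtimes_{i=1}^d\bfU_1^*, $$
whereas the numerical core $C(t)$ of the second substep solves the same equation without the defect term and with the same initial value $C(t_0) = Y_0\bigtimes_{i=1}^d\bfU_1^*$. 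The right-hand side $G \mapsto F\bigl(t, G\bigtimes_{i=1}^d\bfU_1\bigr)\bigtimes_{i=1}^d\bfU_1^*$ is $L$-Lipschitz in $G$ (composition of the norm-preserving map $G\mapsto G\bigtimes_{i=1}^d\bfU_1$, the $L$-Lipschitz $F$, and the contraction $\cdot\bigtimes_{i=1}^d\bfU_1^*$), so the Gronwall inequality yields $\| C_1 - A_1 \bigtimes_{i=1}^d \bfU_1^*\| = \|C(t_1)-\widetilde C(t_1)\| \le \int_{t_0}^{t_1} e^{L(t_1-s)}\,\|D(s)\|\,ds \le e^{Lh}\,hL(2Bh+\vartheta)$. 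Combining this with the bound on the second term and using that $hL$, hence $e^{Lh}$, is bounded while $\vartheta = c\,h(BLh+\varepsilon)$, one obtains $\|Y_1-A_1\| \le \hat c\,h(BLh+\varepsilon)$ with $\hat c$ depending only on $d$ and a bound of $hL$.

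I expect the main obstacle, though a mild one, to be the uniform remainder bound $\|R(t)\| \le 2Bh+\vartheta$: this is the point where the one-step approximation estimate of Lemma~\ref{lem:aux-ten}, the boundedness of $F$, and the contractivity of the mode-$i$ projections $\times_i(\bfU_1\bfU_1^*)$ must be brought together, and where the multilinear bookkeeping — commuting mode products, the identity $A\times_i(\bfU_1\bfU_1^*) = A\times_i\bfU_1^*\times_i\bfU_1$, and the norm preservation of $C\mapsto C\bigtimes_{i=1}^d\bfU_1$ — is most delicate. The remaining steps are a direct transcription of the matrix argument in Lemma~\ref{lem:loc-err}.
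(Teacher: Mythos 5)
Your proof is correct and follows essentially the same route as the paper: splitting off the projection error via Lemma~\ref{lem:aux-ten}, reducing to $\| C_1 - A_1 \bigtimes_{i=1}^d \bfU_1^*\|$ by norm preservation, and then comparing the core equation with the perturbed equation for $\widetilde C(t)$ exactly as in the matrix-case proof of Lemma~\ref{lem:loc-err}. In fact you have written out in full the defect/Gronwall comparison that the paper only sketches by reference to the matrix case, and your bookkeeping (remainder bound $\|R(t)\|\le 2Bh+\vartheta$, $L$-Lipschitz reduced right-hand side) is accurate.
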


\begin{proof}
		By construction of the algorithm and by Lemma~\ref{lem:aux-ten} we have 
		\begin{equation*}
		\begin{aligned}
		\| Y_1 - A_1 \|
		&\leq \| Y_1 - A_1 \bigtimes_{i=1}^d \bfU_1 \bfU_1^* \| + \| A_1 \bigtimes_{i=1}^d  \bfU_1 \bfU_1^* - A_1 \| \\
		&\leq \| C_1 \bigtimes_{i=1}^d \bfU_1 - A_1 \bigtimes_{i=1}^d \bfU_1 \bfU_1^*  \| + c\vartheta \\
		&\leq \| C_1 \bigtimes_{i=1}^d \bfU_1 - (A_1 \bigtimes_{i=1}^d \bfU_1^*) \bigtimes_{i=1}^d \bfU_1  \| + c\vartheta \\
		&\leq \| ( C_1 - A_1 \bigtimes_{i=1}^d \bfU_1^*) \bigtimes_{i=1}^d \bfU_1 \|  + c \vartheta \\
		&\leq \| C_1 - A_1 \bigtimes_{i=1}^d \bfU_1^* \| + c \vartheta.
		\end{aligned}
		\end{equation*}
		The problem reduces to estimating $\| C_1 - A_1 \bigtimes_{i=1}^d \bfU_1^* \|$. 
		We introduce the  tensor 
		$$ \widetilde C(t) := A(t) \bigtimes_{i=1}^d \bfU_1^* , 
		$$
		which satisfies 
		$$
		\dt{\widetilde C}(t) = F(t, A(t)) \bigtimes_{i=1}^d \bfU_1^* , 
		\qquad
		\widetilde C(t_0) = Y_0 \bigtimes_{i=1}^d \bfU_1^*.
		$$
		In the same way as in the proof of Lemma~\ref{lem:loc-err} (replacing $\bfS$ by $C$) this is compared with the differential 
		equation for $C(t)$ in the second substep of the integrator. This yields the stated result. \qed
%
\end{proof}

Using the Lipschitz continuity of the function $F$, we conclude the proof of Theorem~\ref{thm:sym-robust-ten} from the local to the global errors by the standard argument of Lady Windermere's fan \cite[Section II.3]{HairerNorsettWanner:ODE_BOOK1}.

\section{Numerical Experiments}
In this section we show results of various numerical experiments. The computations were done using Matlab R2017a software with Tensor Toolbox package v2.6 \cite{TTB_Software}  and TensorLab package v3.0 \cite{vervliet2016tensorlab}. 

\subsection{Addition of symmetric tensors: a computationally inexpensive retraction}
Let $A \in \mathbb{C}^{n \times \dots \times n}$ be a symmetric tensor of multi-linear rank $\textbf{r}=(r, \dots, r)$ and let  $B \in \mathbb{C}^{n \times \dots \times n}$. We consider the addition of two given tensors,
$$ C = A + B,$$
where $C \in \mathbb{C}^{n \times \dots \times n}$ is not necessarily of low rank and we want to compute a symmetric rank-$(r,\dots,r)$ approximation. Such a retraction is typically  required in optimization problems on low-rank manifolds and needs to be computed in each iterative step of a descent algorithm. 
%
%
The approach considered here consists of  reformulating the addition problem as the solution of the following differential equation at time $t=1$:  
$$ \dot{C}(t) = B, \quad C(0) = A.$$

We will compare the solution obtained by computing the full addition and retracting to the manifold of symmetric tensors of multilinear rank $(r,\dots,r)$ with the one obtained from the application of the symmetric low-rank tensor integrator. The advantage of the latter method is that the approximation is built inside the manifold, so that no truncation to rank $r$ is needed. \\

For our numerical example, we initialize A as a symmetric random Tucker tensor of size $100 \times 100 \times 100$ and multi-linear rank $\textbf{r} = (10, 10, 10)$; we take B as an element in the tangent space of the symmetric rank-$\textbf{r}$ tensor manifold at $A$. We compare the dynamical low rank approximation  $Y_1$ generated by the algorithm introduced in Section~\ref{sec:sym-ten} with a low rank symmetric retraction \cite{de2000best,regalia2013monotonically} of the full solution denoted by $X$. For the last part, we use the built-in $\texttt{tucker\_als}$ and $\texttt{tucker\_sym}$ functions of the Tensor Toolbox Package.

\begin{center}
	\includegraphics[width=\textwidth]{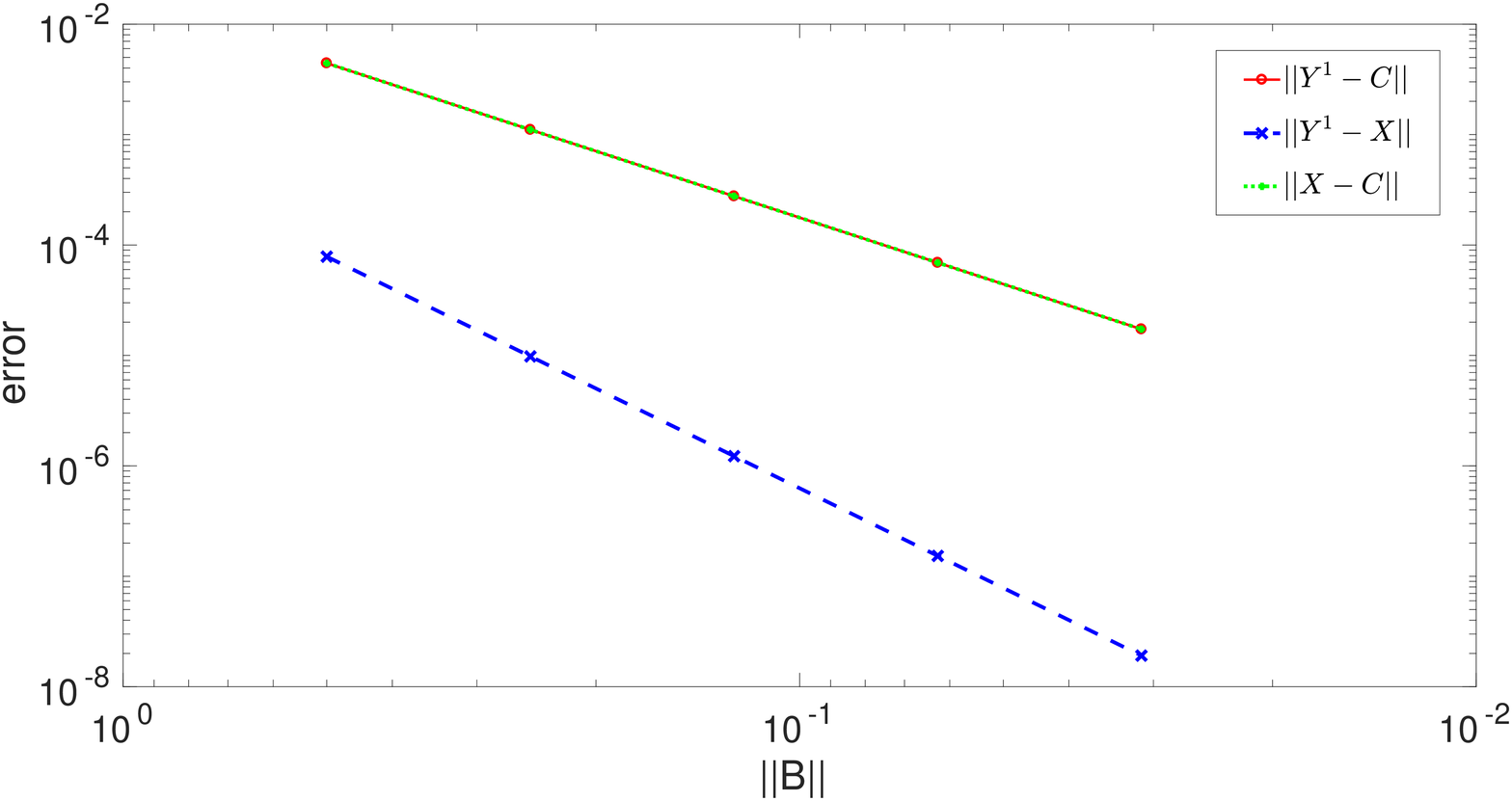}
\end{center}

We observe that the approximation $Y_1$ shows the correct behavior, at reduced computational cost.
Decreasing the norm of the tensor $B$ decreases the approximation error as expected, proportional to $\| B \|^2$.

\subsection{Robustness with respect to small singular values}

We present two numerical examples and show robustness of the proposed symmetric integrator in the presence of small singular values. For the sake of presentation we consider the matrix case. Analogous examples can be implemented for Tucker tensors, and similar results are obtained.

In the first example, the time-dependent matrix is given explicitly as
$$ \textbf{A}(t) = \big( e^{t\textbf{W}} \big) e^t \textbf{D} \big( e^{t\textbf{W}} \big)^\top, \quad 0 \leq t \leq 1 \ .$$
The matrix $\textbf{D} \in \R^{N \times N}$ is diagonal with elements $d_j = 2^{-j}$ and the matrix $\textbf{W} \in \R^{N \times N}$ is skew-symmetric and randomly generated. We choose $N=100$ and final time $T=1$.  We compare the symmetric low-rank integrator presented in Section~\ref{sec:Sym} with a numerical solution obtained with a 4-th order explicit Runge-Kutta method applied to the system of differential equations for dynamical low-rank approximation as derived in \cite{KochLubich07}. 

The numerical results for different ranks are shown in Figure \ref{fig:explicitmatrix}. In contrast to the Runge--Kutta method, the proposed symmetric low-rank integrator does not suffer of a step-size restriction in the presence of small singular values. 
\\ \\
In the second example, we integrate the Lyapunov  matrix differential equation (cf.~\cite{mena2018numerical})
$$ \dot{\textbf{X}}(t) = \textbf{AX}(t) + \textbf{X}(t)\textbf{A}^\top + \textbf{Q}, \quad \textbf{X}(0)=\textbf{U}_0 \textbf{S}_0 \textbf{U}_0^\top \ . $$
Here, we choose $\textbf{A} = \texttt{tridiag}(-1,2,-1) \otimes \textbf{I} + \textbf{I} \otimes \texttt{tridiag}(-1,2,-1) \in \R^{N \times N}$ as a discrete Laplacian. 
The positive definite matrix $\textbf{Q} \in \R^{N \times N}$ has rank 5 and is randomly generated. The orthonormal matrix $\textbf{U}_0 \in \R^{N \times N}$ is randomly generated and $\textbf{S}_0 \in \R^{N \times N}$ is of rank 1 with only one non-zero element, $s_{11} = 1 $.  

The reference solution and the linear subproblems appearing in the definition of the symmetric low-rank  integrator have been solved with the Matlab solver \texttt{ode45} and stringent tolerance parameters \textsc{\{'RelTol', 1e-10, 'AbsTol', 1e-14\} }.  We choose $N=100$ and final time $T=0.1$. The singular values of the reference solution and the absolute errors $\| Y_n -A(t_n) \|_F$ at time $t_n=T$ of the approximate solutions for different ranks calculated with different step-sizes are shown in Figure~\ref{fig:lyapunov}.

\begin{figure}
	\includegraphics[height=5cm, width=\textwidth]{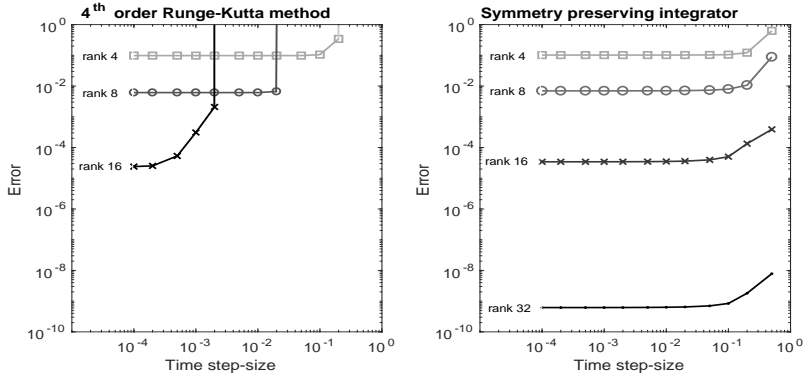}
	\caption{Comparison of the explicit Runge Kutta method (left) and the proposed symmetry-preserving integrator (right) for
		different approximation ranks and step sizes in the case of a given time-dependent symmetric matrix. }
	\label{fig:explicitmatrix}
\end{figure}

\begin{figure}
	\includegraphics[height=5cm, width=\textwidth]{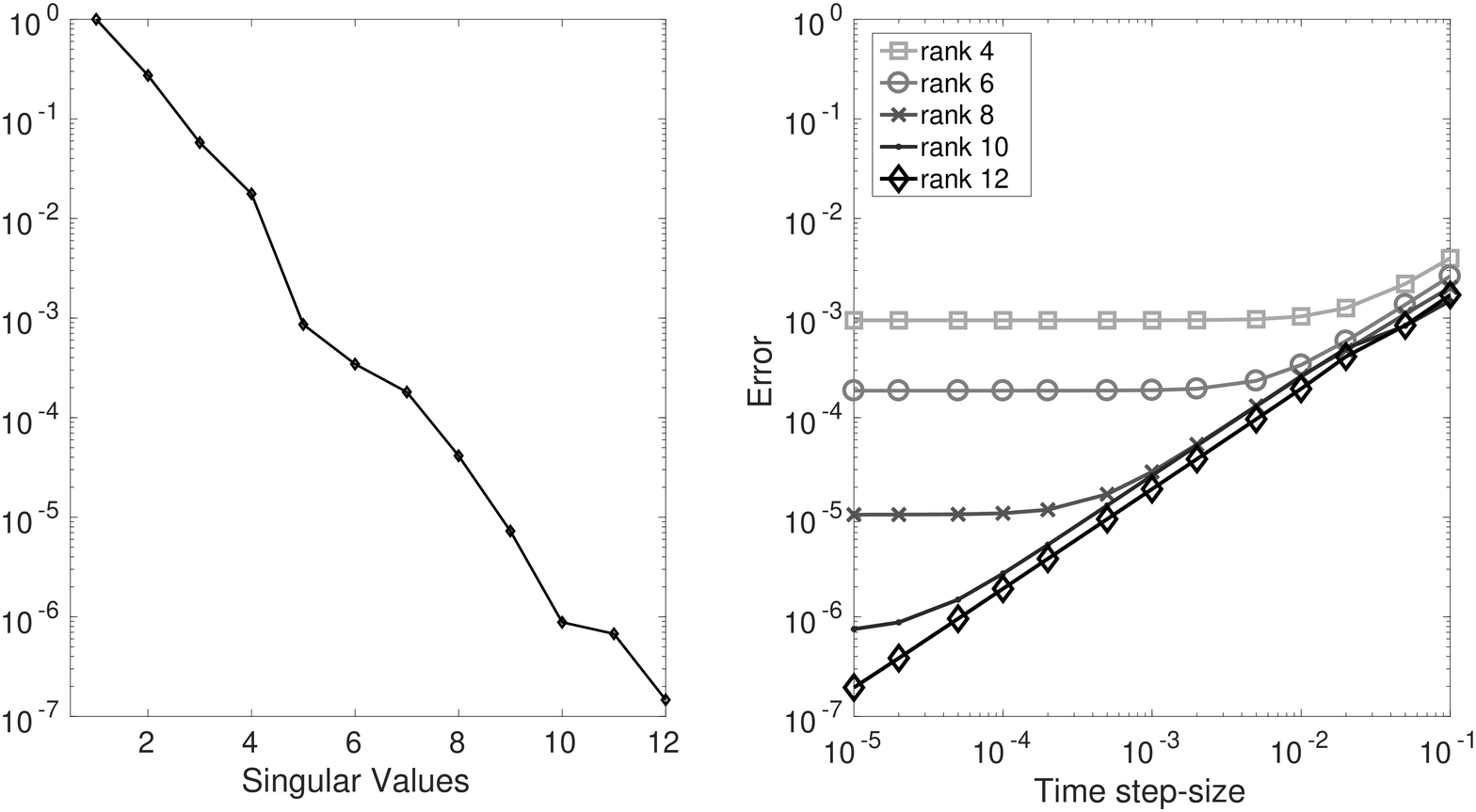}
	\caption{First twelve singular values of the reference solution at time $T=0.1$ and approximation errors for different ranks at different step-sizes for the Lyapunov matrix differential equation.}
	\label{fig:lyapunov}
\end{figure}

\subsection{Ground state of a fermionic multi-particle system}
A natural field of application of the (anti-)symmetric low-rank algorithms is in quantum dynamics of systems  of fermions or bosons, which is described by anti-symmetric or symmetric multivariate wave functions, respectively. In the MCTDHF and MCTDHB methods \cite{AlonSC08,Caillat:MCTDHF}, the approximate wave function is sought for in the form of a low-rank Tucker tensor that is 
anti-symmetric and symmetric, respectively.  It approximates the huge tensor of coefficients with respect to some fixed spatial basis, such as a Fourier basis. The (anti-)symmetric low-rank time integrator proposed in this paper appears ideally suited as a numerical integrator for such problems.

As a first illustration of the approach, we apply the method for the calculation of the ground state of a system of $d$ fermions in 1 space dimension.
We calculate the ground state as the solution for large time of the imaginary-time Schr\"odinger equation
\begin{equation} \label{eq:GroundState}
	\partial_t \psi = -\mathcal{H} \psi, \quad \psi(t_0) = \psi_0.
\end{equation}
We consider the Hamiltonian given by 
	$$ 
	\mathcal{H} := \sum_{l=1}^{d} \Bigl(
	- \tfrac{1}{2} \partial^2_l +
	V(x_l) -
	\sum_{k=l+1}^{d}
	V(x_l - x_k) \Bigr),
	$$
where $x_l \in \R$ represents the position of the $l$-th particle and we choose the torsion potential
$$ 
	V(x) = 1 - \cos(x) \ .
$$
Choosing a collocation method with a tensor Fourier basis set (with $K$ basis functions per particle)
for approximating the anti-symmetric wave function leads to a huge tensor differential equation \eqref{eq:fullEq-ten}, where a low-rank approximation to the anti-symmetric $d$-dimensional tensor $Y(t)\in \C^{K\times\dots\times K}$ is to be computed.  This is done with a variational splitting method. The stiffness introduced by the Laplacian will be handled with a split-step Fourier method \cite{lubich2008quantum} while the two-particle interaction is treated with the anti-symmetric low-rank integrator of Section~\ref{sec:sym-ten}. 

\noindent
We introduce the space discretization 
$$x_j = \frac{2 \pi j }{K}, \quad j=-K/2, \dots, K/2-1 \ .$$ 
Let $Y(t)$ be a time-dependent tensor defined element-wise by
$$ Y_{j_1, j_2, \dots, j_d}(t) = \psi(t, x_{j_1}, \dots, x_{j_d}) \ .$$
The fermionic property of the system implies that the tensor $Y(t)$ is anti-symmetric.
Denoting $\mathcal{F}_K$ the Fourier matrix, we define
\begin{align*}
& \textbf{D} := \mathcal{F}_K^{-1} \text{diag} \{ \tfrac{1}{2} j^2 \} \mathcal{F}_K ,
\\
& \textbf{V}_\text{cos} := \text{diag} \{ \cos(x_j) \},
\\
& \textbf{V}_\text{sin} := \text{diag} \{ \sin(x_j) \} \ .
\end{align*}
The Fourier collocation space discretization of (\ref{eq:GroundState}) is equivalent to the system
\begin{equation}
	\label{eq:DiscreteGroundState}
	\dot{Y} = -H[Y], \qquad Y(t_0) = C_0 \bigtimes_{i=1}^d \textbf{U}_0 .
\end{equation}
Using the trigonometric equality
$\cos(x-y) = \cos(x)\cos(y) + \sin(x)\sin(y)$,
the linear operator $H$ can be written in a multi-linear product form as
$$ 
	H[Y] =  \frac{3d - d^2}{2} Y +
			\sum_{l=1}^{d} Y \times_l  \textbf{D} - 
			Y \times_l \textbf{V}_{\text{cos}} + 
			\sum_{k=l+1}^{d} 
			Y \times_l \textbf{V}_{\cos} \times_k \textbf{V}_{\cos} + 
			Y \times_l \textbf{V}_{\sin} \times_k \textbf{V}_{\sin}  
			\ .
$$
In order to remove the stiffness  introduced by the Laplacian, we split \eqref{eq:DiscreteGroundState} in $(d+2)$ sub-problems.
The solution of the first sub-problem at time $t_1 = t_0 + h$ is obtained updating the core tensor $C_0$ in the initial data,
$$ \widetilde{C}_0 = \exp \Big( -h \frac{3d - d^2}{2} \Big) C_0 \ .$$
Afterwards, we start considering the equation 
$$ \dot{Y}_I = -Y_I \times_1 \textbf{D} + Y_I \times_1 \textbf{V}_{\cos}, \quad Y_I(t_0) = \widetilde{C}_0 \times \textbf{U}_0 \ .$$
We matricize in the first mode, 
$$  \textbf{Mat}_1( \dot{Y}_I) = -\textbf{D} \,\textbf{Mat}_1(Y_I) + \textbf{V}_{\cos}\textbf{Mat}_1(Y_I)$$
with initial data,
$$ \textbf{Mat}_1(Y_I(t_0)) = \textbf{Mat}_1(Y_0) = \textbf{U}_0 \textbf{Mat}_1( \widetilde{C}_0 \bigtimes_{i=2}^d \textbf{U}_0) \ . $$
The solution can now be computed with the 1-dimensional split-step Fourier method. 
Denoting
$$
	\widetilde{\textbf{U}}_0= 
		e^{+\frac{h}{2} \textbf{V}_{\cos}}
		\mathcal{F}_K^{-1}
		e^{-h\textbf{T}}
		\mathcal{F}_K
		e^{+\frac{h}{2} \textbf{V}_{\cos}}
		\textbf{U}_0 \ ,
		\quad
		\textbf{T} = \mathcal{F}_K \textbf{D} \mathcal{F}_K^{-1} 
$$
and tensorizing back in the first mode we have that
$$ Y_I(t_1) =  \widetilde{C}_0 \times_1 \widetilde{\textbf{U}}_0 \bigtimes_{i=2}^d \textbf{U}_0 \ .$$
Taking this as initial condition and iterating the same process for all the successive modes we obtain the updated anti-symmetric tensor 
$$ X_0 = \widetilde{C}_0 \bigtimes_{i=1}^d \widetilde{\textbf{U}}_0 \ .$$
We now apply the anti-symmetric low-rank integrator to the multi-particle interaction,
$$ \dot{X} = -W[X], \qquad X(t_0) = X_0 , $$
where
$$ 
	W[Y] = 	\sum_{l=1}^{d} 
			\sum_{k=l+1}^d 
			Y \times_l \textbf{V}_{\cos} \times_k \textbf{V}_{\cos} + 
			Y \times_l \textbf{V}_{\sin} \times_k \textbf{V}_{\sin} \, .
$$
The core $C_0$ is renormalized after each step, since the absolute size of the tensor is irrelevant.
We emphasize the fact that all along the implementation, it is crucial to use the structure of the Tucker tensor and avoid to build the huge matrix $\textbf{V}_0$ appearing in the definition of the integrator. The \textbf{K} and $C$ problems are linear and can be solved with few iterations of the Arnoldi process. 

In our numerical experiment we choose $d=3$ particles in 1 space dimension and fix the  number of Fourier basis functions per particle at $K=128$, the step-size at $h=0.01$ and we propagate the system until $T=40$.

We introduce the discrete energy
$$ 
E(Y) = \biggl(\frac{2\pi}{K}\biggr)^d \, \langle  Y, H[Y] \rangle_F\,.
 $$
Although the integrator preserves the anti-symmetry in theory, in a straightforward implementation round-off errors will destroy the anti-symmetry and take the system to the lowest state of energy: the bosonic ground state - the one achieved starting from a symmetric initial value. This behavior can be corrected in the integrator by enforcing the anti-symmetry of the small core tensor (which is violated only by round-off errors) at each step or ever after a few steps. In this way the computation tends to the fermionic ground state. 
The energy levels generated by the approximation of rank 5 
are shown in Figure \ref{fig:GroundState} for the bosonic system and the fermionic system with and without enforced anti-symmetrization.

\begin{figure}
	\includegraphics[width=\textwidth]{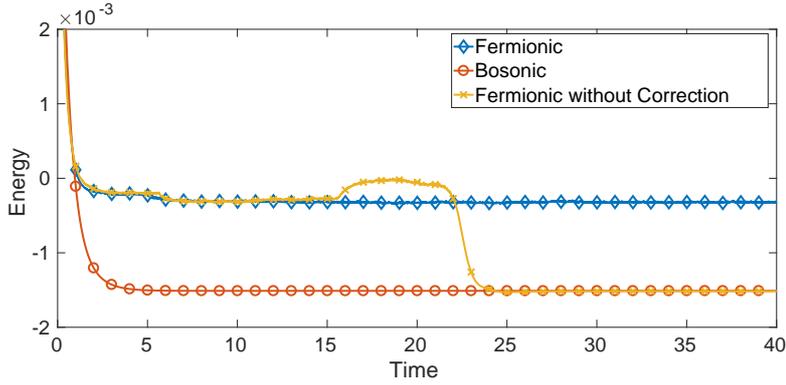}
	\caption{Evolution to the fermionic ground state energy computed with rank 5.}
	\label{fig:GroundState}
\end{figure}

\subsection{MCTDHF - Ultra fast laser dynamics}
In the second example we consider the situation where an external pulsing laser field is introduced in the system. We refer to \cite{Caillat:MCTDHF} for the physical description of the problem and its MCTDHF formulation. We consider the time-dependent Schr{\"o}dinger equation 

\begin{equation*}
	i \partial_t \psi = \mathcal{H}(t) \psi, \quad \psi(t_0) = \psi_0,
\end{equation*}
with the Hamiltonian 
	$$ 
	\mathcal{H}(t) := \sum_{l=1}^{d} \frac{1}{2} 
	\Big[
	\frac{1}{i} \partial_l - \omega(t)
	\Big]^2 +
	V(x_l) -
	\sum_{k=l+1}^{d}
	V(x_l - x_k).
	$$
with the torsion potential $V$ as before and with parameters
	$$
	\omega(t) := A_0 e^{-t^2 / \tau^2} \sin(\Omega t),
	\quad
	A_0 = 100,
	\quad
	\Omega = 100 , 
	\quad
	\tau = 0.2 \pi \ .
	$$	
As initial value, we choose the ground-state calculated at the previous step. 
We fix the  number of Fourier basis functions per particle at $K=128$, the step-size at $h=0.005$ and we propagate the system until $T=1$ by the same algorithm as in the previous subsection, but this time for the real-time evolution instead of the imaginary-time evolution.

The time evolution of the energy obtained by the approximation of rank 5 is shown in Figure~\ref{fig:energy}.

\begin{figure}
	\includegraphics[width=\textwidth]{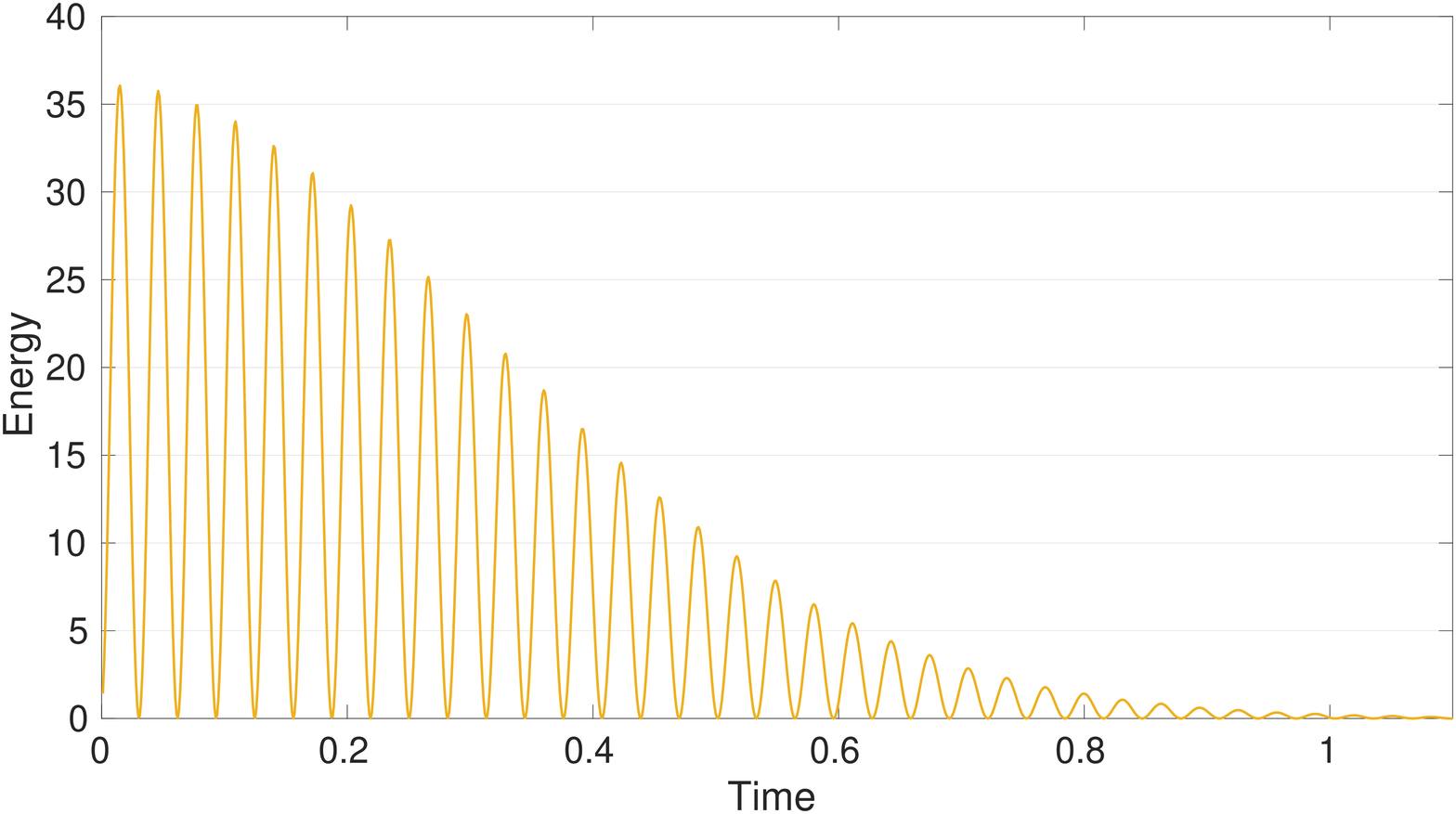}
	\caption{Energy evolution computed with rank 5.}
	\label{fig:energy}
\end{figure}

\begin{acknowledgements}
	 We thank Bal\'azs Kov\'acs and Hanna Walach for their constructive comments and suggestions. This work was supported by Deutsche Forschungsgemeinschaft, Graduiertenkolleg 1838 ``Spectral Theory and Dynamics of Quantum Systems". 
\end{acknowledgements}

\bibliographystyle{plain}
\bibliography{dlrsym}

\end{document}